\newtheorem{theorem}{Theorem}[section]
\newtheorem{lemma}{Lemma}[section]
\newtheorem{definition}{Definition}[section]
\newtheorem{remark}{Remark}[section]
\numberwithin{equation}{section}
\newenvironment{proof}[1][Proof]{\noindent\textbf{#1.} }{\hfill $\Box$}
 \makeatletter\setlength{\textwidth}{15.0cm}
\begin{document}
\author{Long-Jiang Gu$^a$ and  Huan-Song Zhou$^{b}$\thanks{Corresponding
author: hszhou@whut.edu.cn}
 \\
 \\
{\small   $^a$Wuhan Institute of Physics and Mathematics, Chinese Academy of Sciences,}\\
{\small P.O. Box 71010, Wuhan 430071, China}\\
{\small $^b$Department of Mathematics, Wuhan University of Technology, Wuhan 430070, China}\\
}
\title{ An improved fountain theorem   and its application
  }
 \date{}
  \maketitle

\begin{abstract}
The main aim of the paper is to prove a   fountain theorem without assuming the $\tau$-upper semi-continuity condition on the variational functional. Using this improved fountain
theorem, we may deal with more general strongly indefinite elliptic problems with various sign-changing nonlinear terms.  As an application,
 we obtain infinitely many solutions for a semilinear Schr$\ddot{\text{o}}$dinger equation with strongly indefinite
 structure and sign-changing nonlinearity.
\end{abstract}
\normalsize
{\it MSC 2010:} 35J20; 35J60; 35Q55; 58E05\\
{\it Keywords:} Fountain theorem; strongly indefinite functional; elliptic equation; sign-changing potentials; infinitely many solutions

\section{Introduction}
\noindent

Since the pioneering works of Bartsch and Willem \cite{old fountain,He} (see also, \cite{MW}), variant fountain theorems are established and which have been  used to study the existence
of infinitely many solutions for various elliptic problems, see e.g., \cite{old fountain,He,Bartsch3,BC1,BC2,hz,ZouWM} and the references  therein. In order to investigate
infinitely many critical points of strongly indefinite functionals, Batkam-Colin \cite{BC1} established a generalized fountain theorem based on the so-called
$\tau$-topology introduced by Kryszewski-Szulkin \cite{SK}. For recalling the fountain theorem proved in \cite{BC1}, we introduce some notations and definitions
which are also often used in the following sections of the paper.

Let $X$  be a separable Hilbert space and $Y\subset X$ be a closed subspace of $X$
endowed with  inner product $\langle\cdot,\cdot\rangle$ and   norm $\| \cdot \|$. Let
\begin{equation}
    X=Y\bigoplus Z \text{ with } Z=Y^{\bot}, Y=\overline{\bigoplus_{j=0}^{\infty}\mathbb{R}e_{j}}  \text{ and }
    Z =\overline{\bigoplus_{j=0}^{\infty}\mathbb{R}f_{j}},
\end{equation}
where $\{e_{j} \}_{j\geq0}$ and $\{f_{j} \}_{j\geq0}$ are orthonormal  bases of $Y$ and $Z$, respectively. Moreover, we define

\begin{equation}\label{fountain.1.1}
   Y_{k}:=Y \bigoplus (\bigoplus_{j=0}^{k}\mathbb{R}f_{j})
 \quad\quad \text{and} \quad\quad
 Z_{k}:=\overline{\bigoplus_{j=k}^{\infty}\mathbb{R}f_{j}},
\end{equation}
and let
\begin{equation}
    P : X \rightarrow Y, \quad   Q : X \rightarrow Z \quad  \text{and}\quad   P_k : X \rightarrow Y_{k-1}, \quad   Q_k : X \rightarrow Z_k
\end{equation}
be the orthogonal projections. The $\tau$-topology on $X=Y\bigoplus Z$ introduced in \cite{SK} is the topology associated to the following
norm
\begin{equation}\label{fountain.tau norm}
     \| u \|_{\tau}:=
      max\{\sum_{j=0}^{\infty}\frac{1}{2^{j+1}}|\langle Pu,e_{j}\rangle|,
      \| Qu \|\}  ,   \text{ for } u\in X.
\end{equation}
By the above definition, we see that
\begin{equation}\label{fountain.norminequality}
    \|u\|_{\tau}\leq max\{ \| Pu \|  , \| Qu \| \}   \leq\|u\| \text{ for } u\in X.
\end{equation}
Furthermore, it follows from \cite{SK} and the appendix of \cite{weak linking} that, if   $\{u_{n}\}\subset X$ is   bounded, then
\begin{equation}\label{fountain.1.remark1}
    u_{n}\overset{n}{\rightarrow} u \text{ in } \tau\text{-topology} \Leftrightarrow
  Pu_{n}\overset{n}{\rightharpoonup} Pu   \text{ and }   Qu_{n}\overset{n}{\rightarrow} Qu.
\end{equation}

\begin{remark}\label{fountain.remark2}
 Let
$$  \tilde{e}_{j}=
 \left\{
   \begin{array}{ll}
     f_{j},  \quad j=0,1,...,k-1\\
     e_{j-k},   \quad j\geq k ,
   \end{array}
   \right.$$
and define the following norm
\begin{equation}\label{fountain.tau k norm}
    \|u\|_{\tau_{k}}=max\{\sum_{j=0}^{\infty}\frac{1}{2^{j+1}}|\langle P_{k}u,\tilde{e}_{j} \rangle|,
      \| Q_{k}u \|\},
\end{equation}
then, $\| \cdot \|_{\tau_{k}}$ and $\| \cdot \|_{\tau}$ are equivalent for all $k\geq1$.
In fact, it is enough  to show that $\| \cdot \|_{\tau_{1}}$ and $\| \cdot \|_{\tau}$ are equivalent. By (\ref{fountain.tau norm}),
\begin{eqnarray*}
  \| u \|_{\tau}^2 &\leq&  \Big(\sum_{j=0}^{\infty}\frac{1}{2^{j+1}}|\langle  u,e_{j}\rangle|\Big)^2+ \sum_{j=0}^{\infty}|\langle u,f_{j}\rangle|^2 \\
    &=&  \Big(\sum_{j=0}^{\infty}\frac{1}{2^{j+1}}|\langle  u,e_{j}\rangle|\Big)^2+|\langle u,f_{0}\rangle|^2+ \sum_{j=1}^{\infty}|\langle u,f_{j}\rangle|^2 \\
    &\leq&   4\Big(\frac{1}{2}|\langle u,f_{0}\rangle|+\sum_{j=1}^{\infty}\frac{1}{2^{j+1}}|\langle  u,e_{j-1}\rangle|\Big)^2 +4 \sum_{j=1}^{\infty}|\langle u,f_{j}\rangle|^2
    \leq   8\| u \|_{\tau_1}^2.
\end{eqnarray*}
On the other hand,
\begin{eqnarray*}
  \| u \|_{\tau_1} &\leq&    \frac{1}{2}|\langle u,f_{0}\rangle|+\sum_{j=0}^{\infty}\frac{1}{2^{j+2}}|\langle  u,e_{j}\rangle|  + \Big( \sum_{j=1}^{\infty}|\langle u,f_{j}\rangle|^2 \Big)^{\frac{1}{2}} \\
    &\leq&  \sum_{j=0}^{\infty}\frac{1}{2^{j+1}}|\langle  u,e_{j}\rangle|+2^{\frac{1}{2}}\Big(|\langle u,f_{0}\rangle|^2+ \sum_{j=1}^{\infty}|\langle u,f_{j}\rangle|^2 \Big)^{\frac{1}{2}}
    \leq   2^{\frac{3}{2}}   \| u \|_{\tau }.
\end{eqnarray*}

\end{remark}

Throughout the paper, for  $r_{k}>0$ and $\rho_{k}>0$, we always set
\begin{equation}\label{fountain.1.2}
    B_{k}:=\{u\in Y_{k} : \|u\| <\rho_{k}\}
\quad and \quad
N_{k}:=\{u\in Z_{k} : \|u\| =r_{k}\}.
\end{equation}

Since $X$ is a Hilbert space, if $\varphi \in C^{1}(X,\mathbb{R})$, $\nabla\varphi$ is given by the formula
$$  \langle \nabla\varphi(u),v \rangle= \varphi'(u)v, \text{ for all } v \in X.   $$

With the above notations, for an even functional the fountain theorem proved in \cite{BC1} can be stated as follows


\begin{theorem}\cite[Corollary 13]{BC1}\label{fountain.Batkam's Fountain Thm}
Let $\varphi \in C^{1}(X,\mathbb{R})$ be  an even functional satisfying
 \begin{itemize}
   \item  $\nabla\varphi$ is weakly sequentially continuous, i.e., for any $v \in X$, $\varphi'(u_n)v \overset{n}{\rightarrow} \varphi'(u)v$ if
   $u_n \overset{n}{\rightharpoonup} u$ weakly in $X$.
   \item  $\varphi$ is  $\tau$-upper semi-continuous, i.e., for any $c \in\mathbb{R}$, the set
   $\varphi_{c}:=\{u\in X: \varphi(u)\geq c\}$
is $\tau$-closed.
   \item For any $c>0$, $\varphi$ satisfies $ (PS)_c$ condition, i.e., any
sequence $\{u_{n}\}\subset X$ with
 $$  \varphi(u_{n})\overset{n}{\rightarrow} c \quad  \text{and}  \quad \varphi'(u_{n})\overset{n}{\rightarrow} 0 \text{ in } X' \ (\text{the dual space of }X)$$
has a convergent subsequence.
 \end{itemize}
Additionally, if there exist   $\rho_{k}>r_{k}>0$ such that:
\[ d_{k}:=\sup\limits_{u\in Y_{k},\|u\|  \leq \rho_{k}} \varphi(u) < \infty , \leqno{(A_{1})}\]
 \[ a_{k}:=\sup\limits_{u\in Y_{k},\|u\|= \rho_{k}}\varphi(u)\leq 0,\leqno{(A_{2})}
  \]
 \[b_{k}:=\inf\limits_{u\in Z_{k},\|u\|= r_{k}}\varphi(u) \rightarrow \infty \text{ as }  k \rightarrow \infty. \leqno{(A_{3})}
 \]
\\Then, $\varphi$ has  an unbounded  sequence of critical values.
\end{theorem}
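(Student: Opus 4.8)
The plan is to run an equivariant minimax scheme in the $\tau$-topology, in the spirit of the Kryszewski–Szulkin linking for strongly indefinite functionals, producing a sequence of minimax values that escape to $+\infty$ and each of which is forced to be critical by a $\tau$-deformation argument. Fix $k$ and introduce the admissible class of maps
\[
\Gamma_k := \{\, \gamma \in C(B_k, X) : \gamma \text{ is odd and } \tau\text{-continuous},\ \gamma|_{\partial B_k} = \mathrm{id},\ \gamma \text{ admissible for the } \tau\text{-degree} \,\},
\]
and set
\[
c_k := \inf_{\gamma \in \Gamma_k} \ \sup_{u \in B_k} \varphi(\gamma(u)).
\]
Since $\mathrm{id} \in \Gamma_k$, condition $(A_1)$ gives $c_k \le d_k < \infty$, so each $c_k$ is well defined and finite. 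The evenness of $\varphi$ is what makes the odd maps in $\Gamma_k$ compatible with the level sets and is what ultimately feeds the intersection argument.

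Next I would establish the intersection (linking) lemma: every $\gamma \in \Gamma_k$ satisfies $\gamma(B_k) \cap N_k \neq \emptyset$. This is the heart of the matter and the place where the strongly indefinite structure forbids a naive finite-dimensional argument: because $Y_k$ and $Z_k$ are both infinite-dimensional, one cannot use a plain Borsuk–Ulam theorem and must instead invoke the $\mathbb{Z}_2$-equivariant degree built in the $\tau$-topology (equivalently, a limit-index argument), exploiting the orthogonal splitting $Y_k \perp Z_k$, the projections $P_k, Q_k$, the boundary normalization $\gamma|_{\partial B_k} = \mathrm{id}$, and the convergence characterization (\ref{fountain.1.remark1}). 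Granting the lemma, any $u_0 \in \gamma(B_k) \cap N_k$ lies in $Z_k$ with $\|u_0\| = r_k$, so $\sup_{u \in B_k} \varphi(\gamma(u)) \ge \varphi(u_0) \ge b_k$; taking the infimum over $\gamma$ yields $c_k \ge b_k$, and hence $c_k \to \infty$ by $(A_3)$. In particular $c_k > 0$ for all large $k$, which is what we need to invoke $(PS)_{c_k}$.

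It remains to show each such $c_k$ is a critical value. Suppose $c := c_k$ is not critical. Using $(PS)_c$, the weak sequential continuity of $\nabla\varphi$, and crucially the $\tau$-upper semicontinuity of $\varphi$, I would construct an odd, $\tau$-continuous deformation $\eta : X \to X$ with $\eta(\{\varphi \le c+\varepsilon\}) \subset \{\varphi \le c-\varepsilon\}$ for some small $\varepsilon > 0$ and with $\eta = \mathrm{id}$ on $\{\varphi \le c - 2\varepsilon\}$. Choosing $\varepsilon < c/2$, condition $(A_2)$ gives $\varphi \le a_k \le 0 < c - 2\varepsilon$ on $\partial B_k$, so $\eta$ fixes $\partial B_k$; since $\eta$ is odd, $\tau$-continuous, and preserves the admissibility bookkeeping, $\eta \circ \gamma \in \Gamma_k$ whenever $\gamma \in \Gamma_k$. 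Picking $\gamma$ with $\sup_{B_k} \varphi \circ \gamma < c + \varepsilon$ and applying $\eta$ gives $\sup_{B_k} \varphi \circ (\eta \circ \gamma) \le c - \varepsilon$, contradicting $c = c_k$. Therefore every $c_k$ is a critical value, and since $c_k \ge b_k \to \infty$ these form an unbounded sequence of critical values, as claimed.

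The main obstacle is the construction of the equivariant deformation together with the degree/intersection argument \emph{in the $\tau$-topology}. The flow of $-\nabla\varphi$ is well behaved in the norm topology but need not be $\tau$-continuous; the Kryszewski–Szulkin remedy mixes the gradient direction with finite-rank projections, and it is precisely the $\tau$-upper semicontinuity that guarantees the superlevel sets $\varphi_c$ are $\tau$-closed, so that a locally finite, odd, $\tau$-continuous pseudo-gradient field (hence the deformation $\eta$ above) exists. Verifying that $\eta \circ \gamma$ remains admissible for the $\tau$-degree, so that the intersection lemma keeps applying, is the other delicate bookkeeping point. This is exactly the hypothesis that the present paper aims to dispense with, which signals that the improved version will have to replace this $\tau$-closedness of level sets by a more robust deformation mechanism.
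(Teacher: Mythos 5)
The paper does not prove this statement: it is quoted verbatim as \cite[Corollary 13]{BC1}, so there is no in-paper proof to compare against. Your sketch reproduces, essentially correctly, the Batkam--Colin strategy by which the cited result is actually established --- an equivariant minimax over odd, $\tau$-continuous, degree-admissible maps, the intersection property $\gamma(B_k)\cap N_k\neq\emptyset$ giving $b_k\le c_k\le d_k$, and a $\tau$-continuous equivariant deformation (built from a locally finite pseudo-gradient field whose existence rests on the $\tau$-upper semicontinuity) forcing each $c_k$ to be critical, with $(A_2)$ pinning the boundary and $(A_3)$ sending $c_k\to\infty$. This matches the paper's own summary of the argument of \cite{BC1} in the introduction; note only that the two steps you black-box (the $\tau$-topology intersection lemma and the construction of the $\tau$-continuous deformation) carry essentially all of the technical weight, and that the paper's actual contribution, Theorem \ref{fountain.Fountain Thm}, deliberately abandons this minimax characterization of critical values --- replacing it with a contradiction argument producing Palais--Smale sequences localized away from $0$ in the $\tau$-norm and distinguishing the resulting critical points by their norms rather than their values --- precisely because the $\tau$-upper semicontinuity your deformation relies on is dropped there.
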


 The $\tau$-upper semi-continuity was proposed in \cite{SK} for showing  a generalized linking theorem. Similar to \cite{SK},
 this condition is also required in the above Theorem \ref{fountain.Batkam's Fountain Thm}, which is mainly used to construct a suitable vector field.
Theorem 1.1 can be used to deal with some strongly indefinite elliptic problems, but the $\tau$-upper semi-continuity assumption
requires that  the primitive functions of the nonlinearities of the elliptic problems should be positive, see condition $(\mathbf{f_4})$
in \cite{BC1}. It is natural to ask what would happen if the nonlinear terms of an elliptic problem change  sign and lose the positivity
condition $(\mathbf{f_4})$? So, the main aim of the paper is to establish a variant fountain theorem without assuming the $\tau$-upper semi-continuity
and then we may answer the above question, see our Theorems \ref{fountain.Fountain Thm} and \ref{fountain.existence result}. Our proofs are motivated by the papers \cite{BC1,CH,Rup}. We mention that if the
$\tau$-upper semi-continuity of $\varphi$ is removed, several steps in the proofs for Theorem 1.1 in \cite{BC1} seem not working any more,
for examples:
\begin{itemize}
  \item We cannot construct the pseudogradient vector by the same way as in \cite{BC1} since the set $\varphi^{-1}(-\infty,c)$ may
  not be $\tau$-open now. This difficulty is overcome in this paper by using some ideas from \cite{CH}.
  \item To the authors' knowledge, the intersection lemma used in \cite{BC1} is no longer applicable since the descending flow in our paper
  has different behavior from that of \cite{BC1}. In this paper, we use the intersection lemma given in \cite{Rup} instead.
  \item We cannot make an explicit mini-max characterization on the critical values of $\varphi$ because of the lack of $\tau$-upper semi-continuity
  for $\varphi$, then it is hopeless to get infinitely many different critical points of $\varphi$ by comparing their critical values as that
  of \cite{BC1} or \cite{old fountain}. In this paper, we get infinitely many different critical points $\{u_n\}$ of $\varphi$ by  comparing their norm $\|u_n\|$ and
  proving $\|u_n\|\overset{n}{\rightarrow} +\infty$.
\end{itemize}

Now,  we give our improved fountain theorem:

\begin{theorem}\label{fountain.Fountain Thm}
Let $\varphi \in C^{1}(X,\mathbb{R})$ be an even functional
satisfying $ (PS)^c$ condition (i.e., any
sequence $\{u_{n}\}\subset X$ with $ \sup\limits_{n} \varphi(u_{n}) \leq c$  and $\varphi'(u_{n})\overset{n}{\rightarrow} 0$
having a convergent subsequence)
 and let $\nabla\varphi$ be weakly sequentially continuous.
For any $k \in \mathbf{N}$, if there exists $\rho_{k}>r_{k}>0$ such that, in addition to
the above assumptions $(A_1)$ 
and $(A_3)$,
there holds 
\[
a_{k}:=\sup\limits_{u\in Y_{k},\|u\|= \rho_{k}}\varphi(u)< \inf\limits_{u\in Z_{k},\|u\|\leq r_{k}}\varphi(u), \leqno{(A_{2})'}
\]
\[
\sup\limits_{\|u\|_{\tau}<\delta }\varphi(u) \leq C_{\delta} < \infty, \text{ for any } \delta>0,\leqno{(A_{4})}
\]
then, $\varphi$ has  a sequence of critical points $\{u^{k_m}\}$ such that
    $\lim\limits_{m\rightarrow\infty}\|u^{k_m}\| \rightarrow  \infty$.

\end{theorem}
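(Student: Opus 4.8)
The plan is to run a weak-linking argument at each index $k$ and then, instead of separating the resulting critical points by their critical values, to force their norms to infinity using $(A_4)$. Fix $k$ and write $X = Y_{k-1} \oplus Z_k$, so that $Y_k = Y_{k-1} \oplus \mathbb{R} f_k$ with $\mathbb{R}f_k \subset Z_k$; in this splitting the sphere $\partial B_k = \{u \in Y_k : \|u\| = \rho_k\}$ and the set $N_k$ sit in the classical Benci--Rabinowitz linking position, the only difference being that $Y_{k-1}$ is infinite dimensional, which is exactly what forces us into the $\tau$-topology. I would introduce the minimax value
$$ c_k := \inf_{h \in \Gamma_k} \sup_{u \in \overline{B_k}} \varphi(h(u)), $$
where $\Gamma_k$ is a suitable class of $\tau$-continuous deformations of $\overline{B_k}$ that fix $\partial B_k$ and are admissible for the descending flow constructed below. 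Condition $(A_1)$ and the choice $h = \mathrm{id}$ give $c_k \leq d_k < \infty$, while the intersection lemma of \cite{Rup} (which replaces the one used in \cite{BC1}) guarantees $h(\overline{B_k}) \cap N_k \neq \emptyset$ for every $h \in \Gamma_k$, so that $c_k \geq \inf_{N_k}\varphi = b_k$. Hence $b_k \leq c_k \leq d_k$, and $(A_3)$ yields $c_k \to \infty$.

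The core of the argument, and its main obstacle, is to produce from $c_k$ an actual critical point, which I would do by a deformation argument run by contradiction: if no critical point existed with $\varphi$-value in $[b_k - \varepsilon, d_k + \varepsilon]$, then, using the $(PS)^c$ condition (with $c$ taken slightly above $d_k$) to bound $\|\nabla\varphi\|$ away from zero on the relevant region, I want a $\tau$-continuous flow $\eta(t,\cdot)$ that strictly decreases $\varphi$ on the strip while leaving fixed every point with $\varphi \leq a_k$. Here $(A_2)'$, i.e. $a_k < b_k$, is what guarantees that $\partial B_k$ (where $\varphi \leq a_k$) is untouched, so that $\eta(1,\cdot)\circ h$ remains in $\Gamma_k$; applying $\eta(1,\cdot)$ to a near-optimal competitor would then push the supremum below $b_k$, contradicting the intersection-lemma bound. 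The delicate point is the construction of this flow without the $\tau$-upper semicontinuity of $\varphi$: one can no longer build the pseudo-gradient field out of the $\tau$-open superlevel sets as in \cite{BC1}, because $\varphi^{-1}(-\infty,c)$ need not be $\tau$-open. Following \cite{CH}, I would instead build a locally Lipschitz pseudo-gradient vector field directly from the weak sequential continuity of $\nabla\varphi$, handling the $P$- and $Q$-components of the splitting separately (weak behaviour on $Y$, strong on $Z$), and then verify, through the characterization \eqref{fountain.1.remark1}, that its flow is $\tau$-continuous.

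Granting this, for each $k$ one obtains a critical point $u^k$ with $b_k \leq \varphi(u^k) \leq d_k$. At this stage I deliberately stop trying to distinguish the $u^k$ by their values---the lack of a clean minimax characterization makes that unavailable---and argue through norms instead. Since $\varphi(u^k) \geq b_k \to \infty$ by $(A_3)$, condition $(A_4)$ prevents $\{\|u^k\|_\tau\}$ from being bounded: if $\|u^k\|_\tau \leq \delta$ held along a subsequence, then $\varphi(u^k) \leq C_\delta < \infty$, contradicting $\varphi(u^k) \to \infty$. Therefore $\|u^k\|_\tau \to \infty$, and \eqref{fountain.norminequality} gives $\|u^k\| \geq \|u^k\|_\tau \to \infty$. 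Passing to a subsequence $\{u^{k_m}\}$ produces the asserted sequence of critical points with $\|u^{k_m}\| \to \infty$.

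I expect the whole difficulty to concentrate in coordinating the three ingredients of the deformation step: the $\tau$-continuity of the flow, its compatibility with $\Gamma_k$ via the boundary control from $(A_2)'$, and the intersection lemma of \cite{Rup} taking the place of the one in \cite{BC1}. In particular, verifying that a single locally Lipschitz field simultaneously decreases $\varphi$ on the strip $[b_k-\varepsilon, d_k+\varepsilon]$ and fixes the low-energy region $\{\varphi \leq a_k\}$, all while generating a $\tau$-continuous flow, is the step I would budget the most effort for; condition $(A_4)$ enters there as well, since it confines the high-energy points met by the flow to the region $\|u\|_\tau \geq \delta$, keeping them uniformly away from the $\tau$-origin.
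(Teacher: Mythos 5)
Your overall architecture (deformation flow built from weak sequential continuity of $\nabla\varphi$ following \cite{CH}, Ruppen's intersection lemma in place of the one in \cite{BC1}, norms rather than critical values to separate the critical points) matches the paper's, but the step you lean on at the end --- ``for each $k$ one obtains a critical point $u^k$ with $b_k\leq\varphi(u^k)\leq d_k$'' --- is exactly the step that fails without $\tau$-upper semicontinuity, and your use of $(A_4)$ depends on it. To prove that the minimax level $c_k$ is a critical value you must, assuming no critical points near level $c_k$, build a flow that is \emph{active precisely on a level strip} $\varphi^{-1}[c_k-\epsilon',c_k+\epsilon']$ and inactive below it (so that low levels, in particular $\partial B_k$ where $\varphi\leq a_k$, are controlled). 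Any such localization requires a cutoff depending on $\varphi(u)$, or a partition of unity subordinate to sets of the form $\{\varphi<c\}$; since these sets need not be $\tau$-open and $\varphi$ is not $\tau$-continuous, the resulting field is not $\tau$-locally Lipschitz $\tau$-continuous, its flow is not a $\tau_k$-admissible homotopy, and the intersection lemma can no longer be applied to the deformed sets. The only $\tau$-compatible cutoff available is one in the $\tau$-norm, $\theta(\|u\|_\tau)$, and with that cutoff the field must be a genuine pseudogradient on the \emph{whole} set $E=\varphi^{d_k+1}\cap\{\|u\|_\tau\geq\delta/2\}$; hence the contradiction hypothesis has to be ``$\|\varphi'\|>\epsilon$ on all of $E$'', and what you actually obtain is a Palais--Smale sequence in $E$, i.e.\ a critical point satisfying only $\varphi(u^k)\leq d_k+1$ and $\|u^k\|_\tau\geq\delta/2$ --- with no lower bound on its value. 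Your concluding argument ``$\varphi(u^k)\geq b_k\to\infty$, so $(A_4)$ forces $\|u^k\|_\tau\to\infty$'' therefore rests on a premise the construction cannot deliver. (Your insistence that the homotopies fix $\partial B_k$ runs into the same obstruction --- cutting the field off on $\{\varphi\leq a_k\}$ is again a $\varphi$-level localization; Ruppen's lemma is designed to avoid this by requiring only that $\varphi$ be non-increasing along the homotopy, which is where $(A_2)'$ actually enters.)

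The paper's fix is to make the $\tau$-norm lower bound, not the value, the quantity that diverges: at stage $m$ one chooses $\delta_m>2\|u^{k_{m-1}}\|$, uses $(A_4)$ to bound $\sup_{\|u\|_\tau<\delta_m}\varphi$ by some $C_{\delta_m}$ and $(A_3)$ to pick $k_m$ with $b_{k_m}>C_{\delta_m}$; this inequality is the hypothesis that makes the $\theta(\|u\|_\tau)$-cutoff harmless (the $\tau$-ball of radius $\delta_m$ already lies below level $b_{k_m}-\epsilon$, so the flow's inactivity there does not obstruct pushing $B_{k_m}$ into $\varphi^{b_{k_m}-\epsilon}$, and the intersection lemma then yields the contradiction). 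The resulting critical point satisfies $\|u^{k_m}\|\geq\|u^{k_m}\|_\tau\geq\delta_m/2>\|u^{k_{m-1}}\|$, and iterating gives $\|u^{k_m}\|\to\infty$ directly. So $(A_4)$ is consumed at the start of each stage to launch the deformation argument, not at the end to convert value growth into norm growth; you should restructure your proof around a Lemma of the form ``if $b_k>\sup_{\|u\|_\tau<\delta}\varphi$, then there is a PS sequence in $\varphi^{d_k+1}\cap\{\|u\|_\tau\geq\delta/2\}$'' and drop the minimax value $c_k$ altogether.
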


\begin{remark}\label{rem1.2}
In our Theorem \ref{fountain.Fountain Thm}, the $\tau$-upper semi-continuity is not assumed. But, we
 replaced condition $(A_{2})$ in Theorem \ref{fountain.Batkam's Fountain Thm} by $(A_{2})'$
 and added a new assumption $(A_{4})$. However, the conditions $(A_{2})'$ and $(A_{4})$ are easily verified in the applications, see e.g., the proof of our Theorem \ref{fountain.existence result}.
\end{remark}

With the above Theorem \ref{fountain.Fountain Thm}, we may study the existence of infinitely many solutions for the
 following Schr$\ddot{\text{o}}$dinger equation with strongly indefinite linear part
 and sign-changing nonlinear term:

\begin{equation}\label{fountain.P}
      \left\{
   \begin{array}{ll}
     -\Delta u + V(x)u=g(x)| u|^{q-2}u+h(x)| u|^{p-2}u,\\
     u\in H^{1}(\mathbb{R}^{N}) , \quad N\geq3,
   \end{array}
   \right.
\end{equation}
where
 $1<q<\frac{p}{p-1}<2<p<2^{\ast}$  and  $2^{\ast}=\frac{2N}{N-2}$, $V(x)$, $g(x)$ and $h(x)$ are functions satisfying

\begin{description}
  \item[$(H_{1})$]  $V(x) \in C(\mathbb{R}^N,\mathbb{R})\bigcap L^{\infty}$  and $0$ lies in a spectrum gap of the operator $-\Delta+V$.
  \item[$(H_{2})$] $g\in L^{q_{0}}(\mathbb{R}^{N})\bigcap L^{\infty}(\mathbb{R}^{N})$ with $q_{0}=\frac{2N}{2N-qN+2q}$.
  \item[$(H_{3})$] $h\in L^{p_{0}}(\mathbb{R}^{N})\bigcap L^{\infty}(\mathbb{R}^{N})$ with $p_{0}=\frac{2N}{2N-pN+2p}$ and $h(x)>0$ a.e. in $\mathbb{R}^{N}$.
\end{description}

Since $0$ lies in a gap of the spectrum of $-\Delta+V$, problem (\ref{fountain.P}) may be
strongly indefinite. The nonlinearity in (\ref{fountain.P}) has a   super-linear part and a
sub-linear part which is usually called  concave-convex nonlinearity,   some well-known results corresponding to concave-convex nonlinearities can be found in \cite{G,He} and the references therein. By   $(H_{2})$, we   see that the weight function $g(x)$ may change sign, so, the variational functional of  (\ref{fountain.P})
does not satisfy the $\tau$-upper semi-continuous assumption.

We mention that there are some papers  on the existence of solutions for Schr$\ddot{\text{o}}$dinger equations with  both sign-changing
potential $V(x)$ and indefinite nonlinearities, see, e.g., \cite{cos,cos3,cos2,J F Yang,F Z Wang,wx}, etc.
 But the problems discussed in  \cite{cos,cos3,cos2,F Z Wang,wx} do not have strongly indefinite structure, and in paper \cite{J F Yang}
 the potential $V(x)$ has to be periodic and the weight functions in nonlinear term   must satisfy some additional  conditions.
 There seems no results for problem   (\ref{fountain.P})  under the conditions $(H_1)$-$(H_3)$.
For problem (\ref{fountain.P}), we have the following theorem.
\begin{theorem}\label{fountain.existence result}
If the conditions $(H_{1})$-$(H_{3})$ hold, then
problem (\ref{fountain.P}) has a sequence of nontrival solutions $\{u_{n}\}\subset H^{1}(\mathbb{R}^N)$ with
$\|u_{n}\|_{H^1} \rightarrow  \infty$, as $n\rightarrow\infty$.
\end{theorem}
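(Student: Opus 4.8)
The plan is to realize (\ref{fountain.P}) variationally and then apply Theorem \ref{fountain.Fountain Thm}. Let $A=-\Delta+V$, a self-adjoint operator on $L^2(\mathbb{R}^N)$ whose form domain is $H^1(\mathbb{R}^N)$ by $(H_1)$. Since $0$ lies in a spectral gap of $A$, the spectral projections split $X:=H^1(\mathbb{R}^N)=X^+\oplus X^-$, with the quadratic form $\int(|\nabla u|^2+Vu^2)$ positive on $X^+$ and negative on $X^-$; I would equip $X$ with the inner product $\langle u,v\rangle=(|A|^{1/2}u,|A|^{1/2}v)_{L^2}$, whose norm $\|\cdot\|$ is equivalent to $\|\cdot\|_{H^1}$ because $\mathrm{dist}(0,\sigma(A))>0$ and $V\in L^\infty$. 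Taking $Y:=X^-$, $Z:=X^+$ with orthonormal bases adapted to the splitting, the energy functional
\[
\varphi(u)=\tfrac12\big(\|u^+\|^2-\|u^-\|^2\big)-\tfrac1q\int_{\mathbb{R}^N} g|u|^q-\tfrac1p\int_{\mathbb{R}^N} h|u|^p
\]
is even, and its critical points are weak solutions of (\ref{fountain.P}). Hölder's inequality with $q\,q_0'=p\,p_0'=2^{\ast}$ (this is exactly how $q_0,p_0$ are chosen) and the embedding $X\hookrightarrow L^{2^{\ast}}$ show $\varphi\in C^1(X,\mathbb{R})$.

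Next I would verify the two compactness-type hypotheses. Writing $\nabla\varphi(u)=u^+-u^--K(u)$ with $K$ the Riesz representative of the nonlinear terms, both the weak sequential continuity of $\nabla\varphi$ and, more strongly, the compactness of $K$ (sending weakly convergent to strongly convergent sequences) follow from the local compactness of the Sobolev embedding together with the decay encoded in $g\in L^{q_0}$, $h\in L^{p_0}$: splitting $\mathbb{R}^N=B_R\cup B_R^c$, one uses $L^s_{loc}$-convergence on $B_R$ and the smallness of $\int_{B_R^c}|g|^{q_0}$ and $\int_{B_R^c}|h|^{p_0}$ on the complement.

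The heart of the argument is the $(PS)^c$ condition, which I expect to be the main obstacle. Let $\{u_n\}$ satisfy $\sup_n\varphi(u_n)\le c$ and $\varphi'(u_n)\to0$. Since $h>0$ a.e., the combination
\[
\varphi(u_n)-\tfrac12\varphi'(u_n)u_n=\big(\tfrac12-\tfrac1p\big)\int h|u_n|^p-\big(\tfrac1q-\tfrac12\big)\int g|u_n|^q
\]
yields $\int h|u_n|^p\le C(1+\|u_n\|^q)$. Testing with $u_n^+-u_n^-$ gives $\|u_n\|^2=\varphi'(u_n)(u_n^+-u_n^-)+\int g|u_n|^{q-2}u_n(u_n^+-u_n^-)+\int h|u_n|^{p-2}u_n(u_n^+-u_n^-)$; the $g$-term is $O(\|u_n\|^q)$, while Hölder on the measure $h\,dx$ gives $\int h|u_n|^{p-1}|u_n^+-u_n^-|\le(\int h|u_n|^p)^{(p-1)/p}(\int h|u_n^+-u_n^-|^p)^{1/p}\le C(1+\|u_n\|^{q(p-1)/p})\|u_n\|$. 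Because $1+q(p-1)/p<2\iff q<\frac{p}{p-1}$, every exponent on the right is strictly below $2$, so $\{u_n\}$ is bounded; the role of the hypothesis $q<\frac{p}{p-1}$ is precisely to close this step. Passing to $u_n\rightharpoonup u$, weak sequential continuity gives $\varphi'(u)=0$, and $\|u_n^\pm-u^\pm\|^2=\langle\nabla\varphi(u_n)-\nabla\varphi(u),u_n^\pm-u^\pm\rangle+\langle K(u_n)-K(u),u_n^\pm-u^\pm\rangle\to0$ by compactness of $K$, so $u_n\to u$.

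Finally I would verify the geometry. On $Z_k\subset X^+$ one has $\varphi(u)\ge\frac12\|u\|^2-C_1\beta_k^q\|u\|^q-C_2\beta_k^p\|u\|^p$, where $\beta_k=\sup_{u\in Z_k,\|u\|=1}|u|_{s}\to0$ for each fixed $s\in[2,2^{\ast})$; the two relevant exponents are kept strictly below $2^{\ast}$ by interpolating $g,h$ between $L^{q_0},L^{p_0}$ and $L^\infty$, which is where the $L^\infty$ bounds enter. Choosing $r_k=\beta_k^{-\gamma}$ with $0<\gamma<1$ forces $\inf_{\|u\|=r_k}\varphi\to\infty$, i.e. $(A_3)$, while simultaneously $\inf_{\|u\|\le r_k,\,u\in Z_k}\varphi\ge-\varepsilon_k\to0$. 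On $Y_k=X^-\oplus\mathrm{span}\{f_0,\dots,f_k\}$, a blow-up/compactness argument using $p>2$, $q<2$ and $h>0$ a.e. shows $\varphi(u)\to-\infty$ as $\|u\|\to\infty$; hence $d_k=\sup_{\|u\|\le\rho_k}\varphi<\infty$ gives $(A_1)$, and choosing $\rho_k>r_k$ large enough yields $a_k=\sup_{\|u\|=\rho_k}\varphi<\inf_{\|u\|\le r_k,\,Z_k}\varphi$, i.e. $(A_2)'$. For $(A_4)$, on $\{\|u\|_\tau<\delta\}$ only $\|u^+\|<\delta$ is controlled while $\|u^-\|$ is free, so $\varphi(u)\le\frac12\delta^2-\frac12\|u^-\|^2+C\|u\|^q$, which is bounded above because $q<2$ dominates the indefinite quadratic; here $h>0$ is again used to discard the superlinear term. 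Theorem \ref{fountain.Fountain Thm} then produces critical points $\{u^{k_m}\}$ with $\|u^{k_m}\|\to\infty$, and the equivalence of $\|\cdot\|$ and $\|\cdot\|_{H^1}$ gives $\|u^{k_m}\|_{H^1}\to\infty$; in particular these solutions are nontrivial.
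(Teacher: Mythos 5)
Your overall architecture coincides with the paper's: the same spectral decomposition $H^1(\mathbb{R}^N)=Y\oplus Z$ with the equivalent norm induced by $|L|^{1/2}$, the same $(PS)^c$ argument (the boundedness step via $\varphi(u_n)-\frac12\varphi'(u_n)u_n$ and the exponent count $1+q(p-1)/p<2\iff q<\frac{p}{p-1}$ is exactly the paper's, merely phrased with the single test vector $u_n^+-u_n^-$ instead of testing with $y_n$ and $z_n$ separately), and essentially the same verification of $(A_1)$, $(A_2)'$ and $(A_4)$. The genuine gap is in $(A_3)$. You reduce $\int_{\mathbb{R}^N}h|u|^p$ by H\"older to $C|u|_{L^s(\mathbb{R}^N)}^p$ with some $s<2^{\ast}$ (interpolating $h$ between $L^{p_0}$ and $L^\infty$) and then invoke $\beta_k:=\sup_{u\in Z_k,\,\|u\|=1}|u|_{L^s}\to0$ for the \emph{unweighted} $L^s(\mathbb{R}^N)$-norm. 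This is unjustified and in general false: the standard proof takes $u_k\in Z_k$ with $\|u_k\|=1$ and $|u_k|_{L^s}\ge\beta_k/2$, notes $u_k\rightharpoonup0$, and then needs compactness of $H^1(\mathbb{R}^N)\hookrightarrow L^s(\mathbb{R}^N)$ to conclude $|u_k|_{L^s}\to0$; on all of $\mathbb{R}^N$ that embedding is not compact for any $s$, and a weakly null normalized sequence can carry its $L^s$-mass to infinity. Since $\beta_k\to0$ is precisely what drives $b_k\to\infty$, this is the load-bearing step of the whole geometry, and it is exactly where the decay hypotheses $g\in L^{q_0}$, $h\in L^{p_0}$ must be used for compactness rather than merely for integrability.

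The repair is to keep the weight inside the norm instead of peeling it off: set $\beta_{k}:=\sup_{u\in Z_{k},\|u\|=1}|u|_{L^{p}_{h(x)}}$ with $|u|_{L^p_{h}}=\bigl(\int_{\mathbb{R}^N}h|u|^p\,dx\bigr)^{1/p}$. The paper's Lemma \ref{fountain.compact embedding} shows that $H^{1}(\mathbb{R}^{N})\hookrightarrow L^{p}(h(x),\mathbb{R}^{N})$ \emph{is} compact --- the tail $\int_{|x|>R}|h|^{p_0}\to0$ kills the escaping mass while Rellich handles $B(0,R)$ --- and then $\beta_k\to0$ follows by the usual weak-convergence argument; your choice $r_k=\beta_k^{-\gamma}$ (the paper takes $r_{k}=(p/4)^{1/(p-2)}\beta_{k}^{-p/(p-2)}$) then gives $(A_3)$, the $g$-term being harmless since $q<2$. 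A smaller point: in $(A_2)'$ your ``blow-up/compactness argument'' on $Y_k$ glosses over why $\int h|u|^p$ controls the finite-dimensional component $z$ of $u=y+z$; the paper does this by noting that the projection of the $L^p_{h}$-closure of $Y_k$ onto $\bigoplus_{j=0}^{k}\mathbb{R}f_{j}$ is continuous, whence $\|z\|^{p}\le C|z|^{p}_{L^{p}_{h}}\le C|u|^{p}_{L^{p}_{h}}$ and $\varphi(u)\le(-\frac12\|y\|^{2}+C\|y\|^{q})+(\frac12\|z\|^{2}+C\|z\|^{q}-C\|z\|^{p})\to-\infty$. Everything else in your proposal is correct and matches the paper's proof.
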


\section{Proof of our fountain  theorem}
\indent

In this section, we are going to  prove our fountain  theorem, that is,  Theorem \ref{fountain.Fountain Thm}.
For doing this,    some    lemmas are required.

\begin{lemma}\label{fountain.guodu}
Let $ Y_k$ and $Z_k $ be defined in  (\ref{fountain.1.1}). $\varphi \in C^{1}(X,\mathbb{R})$ is an even functional and $\nabla\varphi$ is weakly sequentially continuous.
If there exist   $k\in \mathbf{N}$   and  $\rho_{k}>r_{k}>0$ such that
conditions $(A_1)$   $(A_2)'$ are satisfied
and there holds
\begin{equation}\label{fountain.jihe.1}
    b_{k}:=\inf\limits_{u\in Z_{k},\|u\|= r_{k}}\varphi(u)>\sup\limits_{\|u\|_{\tau}<\delta  }\varphi(u), \text{ for some  } \delta>0.
\end{equation}
Then, there exists   a sequence   $\{u^k_{n}\}\subset \varphi^{d_{k}+1}:=\{u \in X: \varphi(u)\leq{d_{k}+1}\}$  such that
$$\inf\limits_{n}\|u^k_{n}\|_{\tau}\geq \frac{\delta }{2} \text{ and }
\varphi'(u^k_{n})\overset{n}{\rightarrow} 0  \text{ in } X' (\text{the dual space of } X).$$

\end{lemma}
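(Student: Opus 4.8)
The plan is to argue by contradiction and to manufacture the desired sequence by running a descent flow that is incompatible with the linking geometry encoded in $(A_1)$, $(A_2)'$ and (\ref{fountain.jihe.1}). Suppose no such sequence exists. Then there is an $\alpha>0$ with $\|\varphi'(u)\|\geq\alpha$ whenever $\varphi(u)\leq d_k+1$ and $\|u\|_{\tau}\geq\frac{\delta}{2}$. The observation that drives the whole argument is a direct consequence of (\ref{fountain.jihe.1}): if $\varphi(u)\geq b_k$, then $u\notin\{\,\|\cdot\|_{\tau}<\delta\,\}$, i.e. $\|u\|_{\tau}\geq\delta$. Hence the entire region $\{\varphi\geq b_k\}$ lies inside $\{\|\cdot\|_{\tau}\geq\delta\}\subset\{\|\cdot\|_{\tau}\geq\frac{\delta}{2}\}$, where the gradient is bounded below by $\alpha$. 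Note also that $(A_2)'$ gives $a_k<\inf_{Z_k,\|u\|\leq r_k}\varphi\leq b_k$, since the sphere $\{\|u\|=r_k\}$ sits inside the ball $\{\|u\|\leq r_k\}$.

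Next I would construct a descent vector field. By the standard pseudo-gradient lemma on the open set $\{\varphi'\neq0\}$ I take a locally Lipschitz $W$ with $\|W(u)\|\leq 1$ and $\langle\varphi'(u),W(u)\rangle\geq\frac12\|\varphi'(u)\|$, and cut it off by a Lipschitz function $\chi$ of the $\tau$-continuous quantity $\|u\|_{\tau}$, with $\chi\equiv0$ on $\{\|u\|_{\tau}\leq\frac{\delta}{2}\}$ and $\chi\equiv1$ on $\{\|u\|_{\tau}\geq\delta\}$. The flow $\eta$ solving $\dot\eta=-\chi(\eta)W(\eta)$, $\eta(0,\cdot)=\mathrm{id}$, is then globally defined (the field is bounded), decreases $\varphi$ along trajectories, and fixes every point with $\|u\|_{\tau}\leq\frac{\delta}{2}$.

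The delicate point — and precisely where the proof of Theorem \ref{fountain.Batkam's Fountain Thm} breaks down — is that the intersection lemma of \cite{Rup} needs $\eta(T,\cdot)$ to be $\tau$-continuous, whereas without $\tau$-upper semi-continuity the sublevel sets of $\varphi$ are no longer $\tau$-open, so one cannot cut off using $\varphi$ itself as in \cite{BC1}. This is exactly why the cutoff above is taken in terms of $\|u\|_{\tau}$ (which is $\tau$-continuous by definition) and why the weak sequential continuity of $\nabla\varphi$ is indispensable: on the bounded pieces $\eta([0,T]\times B_k)$ one combines it with the characterization (\ref{fountain.1.remark1}) to check, following the idea of \cite{CH}, that $\chi W$ preserves $\tau$-convergence, so that $\eta(T,\cdot)$ is $\tau$-continuous and admissible for the intersection lemma. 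I expect this verification to be the main obstacle and the bulk of the work.

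Finally I would apply the deformation to $B_k$. By $(A_1)$ we have $\varphi\leq d_k$ on $B_k$, and since $\varphi$ is nonincreasing along $\eta$, the image stays in $\{\varphi\leq d_k\}\subset\varphi^{d_k+1}$; moreover on $\partial B_k$ one has $\varphi\leq a_k<b_k$, a property preserved by the flow, so $\partial B_k$ is kept in $\{\varphi<b_k\}$. The intersection lemma of \cite{Rup} then yields, for every $T>0$, a point $u_T\in B_k$ with $\eta(T,u_T)\in N_k$, whence $\varphi(\eta(T,u_T))\geq b_k$. Since $\varphi$ decreases along the trajectory and ends at a value $\geq b_k$, the whole trajectory remains in $\{\varphi\geq b_k\}\subset\{\|\cdot\|_{\tau}\geq\delta\}$, where $\chi\equiv1$ and $\|\varphi'\|\geq\alpha$; integrating $\frac{d}{dt}\varphi(\eta)=-\chi(\eta)\langle\varphi'(\eta),W(\eta)\rangle\leq-\frac{\alpha}{2}$ gives
\[
\varphi(\eta(T,u_T))\leq\varphi(u_T)-\tfrac{\alpha}{2}T\leq d_k-\tfrac{\alpha}{2}T,
\]
which is $<b_k$ once $T>2(d_k-b_k)/\alpha$. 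This contradicts $\varphi(\eta(T,u_T))\geq b_k$, and the lemma follows.
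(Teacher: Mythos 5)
Your overall strategy --- argue by contradiction, run a descent flow cut off by $\|\cdot\|_{\tau}$, and invoke Ruppen's intersection lemma --- is exactly the paper's, and your closing integration along the trajectory through $u_T$ is a sound variant of property $(\mathbf{iv})$ of the paper's deformation lemma (the paper fixes $T=d_k-b_k+2\epsilon$ and pushes all of $B_k$ below $b_k-\epsilon$; same contradiction). But there is a genuine gap at the one step you yourself flag as ``the bulk of the work'': the construction of the vector field. You take a \emph{standard} pseudo-gradient $W$ from the usual pseudo-gradient lemma and propose to verify afterwards that $\chi W$ ``preserves $\tau$-convergence''. It does not. A standard pseudo-gradient points approximately in the direction of $\nabla\varphi(u)$, and $u\mapsto\nabla\varphi(u)$ is only norm-continuous; weak sequential continuity of $\nabla\varphi$ makes $v\mapsto\langle\nabla\varphi(v),w\rangle$ $\tau$-sequentially continuous on bounded sets for each \emph{fixed} $w$, but it gives no $\tau$-continuity of the map $v\mapsto W(v)$ itself, and it certainly does not yield the second half of $\tau_k$-admissibility, namely that $v-\gamma(s,v)$ stays locally in a finite-dimensional subspace. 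Without that finite-dimensionality, \cite[Proposition 7]{Rup}, whose proof rests on a finite-dimensional reduction, simply does not apply to your flow.

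The fix is not a verification but a different construction, and it is the heart of the paper's proof of Lemma \ref{fountain.Deformation Lemma}: one \emph{freezes the directions}. For each $u\in E\cap B_R$ set $\omega(u)=2\nabla\varphi(u)/\|\nabla\varphi(u)\|^{2}$ and use the weak sequential continuity of $\nabla\varphi$ together with (\ref{fountain.1.remark1}) to find a $\tau$-open neighbourhood $V_u$ on which $\langle\nabla\varphi(v),\omega(u)\rangle>1$; then take a locally finite $\tau$-open refinement, a $\tau$-Lipschitz partition of unity $\{\lambda_i\}$, and define $\xi(v)=\sum_i\lambda_i(v)\,\omega(u_i)$ with the \emph{constant} vectors $\omega(u_i)$. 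Locally this is a finite sum of fixed vectors, so the field has locally finite-dimensional range and is $\tau$-locally Lipschitz; after symmetrization and the cutoff $\theta(\|\cdot\|_{\tau})$ one obtains a flow that is $\tau$-continuous and $\tau_k$-admissible, to which the intersection lemma applies. This is the construction in \cite{CH} and \cite{SK} that you cite, but it is not what your written field does: you have identified the right obstacle while supplying a field that cannot clear it. Replacing the standard pseudo-gradient by this locally finite-dimensional field is not optional; the rest of your argument (the inclusion $\{\varphi\geq b_k\}\subset\{\|\cdot\|_{\tau}\geq\delta\}$ from (\ref{fountain.jihe.1}), the bound $a_k<b_k$ from $(A_2)'$, and the final integration) is correct as stated.
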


In order to prove Lemma \ref{fountain.guodu}, we need the following deformation lemma:

\begin{lemma}\label{fountain.Deformation Lemma}
Under the assumptions of Lemma \ref{fountain.guodu}, let
\begin{equation}\label{fountain.definition of E}
    E=\varphi^{d_k+1} \bigcap\{u\in X:\|u\|_{\tau}\geq\frac{\delta }{2}\},
\end{equation}
 with   $\varphi^{d_k+1}:=\{u\in X:\varphi(u)\leq d_{k}+1\}$  and $\delta$ given in (\ref{fountain.jihe.1}),
if there exists $\epsilon \in (0,\frac{1}{2})$  with
\begin{equation}\label{fountain.epsilon}
    0<\epsilon<b_{k}-max\{a_{k}, \sup\limits_{\|u\|_{\tau}<\delta  }\varphi(u)\},
\end{equation}
such that
$$\|\varphi'(u)\|>\epsilon, \text{ for any } u\in E,$$
then, there exist $T>0$ and a map $\eta(t,u) \in C([0,T] \times B_{k},X)$ with $B_k$ given by (\ref{fountain.1.2}) such that
\begin{description}
  \item[(i)] $\eta(0,u)=u$ and $\eta(t,-u)=-\eta(t,u)$ for any $u\in B_{k}$ and $t \in [0,T]$.
  \item[(ii)] $\varphi(\eta(t,u))$ is non-increasing in $t\in [0,T]$ for fixed $u\in B_{k}$.
  \item[(iii)] $\eta$ is $\tau$-continuous (i.e.,
              $\eta(t_{m},u_{m}) \overset{m}{\rightarrow} \eta(t,u)$ in   $\tau$-topology,
           if  $t_{m}\overset{m}{\rightarrow} t$  and $u_{m}\overset{m}{\rightarrow} u$  in  $\tau$-topology)
          and $\eta(t,\cdot):B_{k}\rightarrow \eta (t, B_{k})$ is a $\tau$-homeomorphism  for any $t\in [0,T]$.
 \item[(iv)] $\eta(T,B_{k})\subset \varphi^{b_{k}-\epsilon}$.
  \item[(v)] For any $(t,u)\in [0,T] \times B_{k}$, there exist a neighborhood $W_{(t,u)}$ of $(t,u)$ in
          the $|\cdot|\times\tau-$ topology such that
          $$\{  v-\eta(s,v)| (s,v)\in  W_{(t,u)}\bigcap  ([0,T] \times B_{k}  )       \}$$
          is contained in a finite-dimensional subspace of $X$.

\end{description}

\end{lemma}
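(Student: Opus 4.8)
The plan is to take $\eta$ to be the flow of $-V$ for a suitable odd, norm‑bounded, $\tau$‑locally Lipschitz and \emph{locally finite‑dimensional} descending pseudo‑gradient field $V$, built on a bounded region. First I would fix $T:=2(d_k+1-b_k+\epsilon)/\epsilon$ and localise the problem to the ball $D:=\{u\in X:\|u\|\le \rho_k+T\}$: since the field will satisfy $\|V\|\le 1$ and trajectories start in $B_k$ (given by (\ref{fountain.1.2})), no trajectory can leave $D$ before time $T$, so it suffices to construct $V$ on $D$. On the bounded set $D$ the $\tau$‑topology is metrizable, and by (\ref{fountain.1.remark1}) $\tau$‑convergence forces $Pu_n\rightharpoonup Pu$ and $Qu_n\to Qu$, hence weak convergence in $X$; together with the weak sequential continuity of $\nabla\varphi$ this shows that for each fixed $z\in X$ the map $u\mapsto\langle\varphi'(u),z\rangle$ is $\tau$‑continuous on $D$. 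This $\tau$‑continuity is exactly the property that takes over the role played by $\tau$‑upper semi‑continuity in \cite{BC1}.

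Next I would construct $V$. On $E\cap D$ one has $\|\varphi'(u)\|>\epsilon$, so for each such $u$ I pick a unit pseudo‑gradient direction $z_u$ with $\langle\varphi'(u),z_u\rangle>\tfrac{2}{3}\epsilon$; by the $\tau$‑continuity just established there is a $\tau$‑open neighbourhood $N_u\subset D$ on which $\langle\varphi'(\cdot),z_u\rangle>\tfrac{\epsilon}{2}$. As $(D,\|\cdot\|_{\tau})$ is metric, hence paracompact, I would take a $\tau$‑locally finite, $\tau$‑Lipschitz partition of unity $\{\lambda_i\}$ subordinate to a locally finite refinement $\{N_{u_i}\}$ and set $W(u)=\sum_i\lambda_i(u)z_{u_i}$, which is $\tau$‑locally Lipschitz, locally finite‑dimensional, satisfies $\|W\|\le1$ and $\langle\varphi'(u),W(u)\rangle\ge\tfrac{\epsilon}{2}$. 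Because $\varphi$ is even, $\varphi'(-u)=-\varphi'(u)$, so passing to the odd part $\tilde W(u)=\tfrac12\bigl(W(u)-W(-u)\bigr)$ preserves all these bounds while making the field odd. Finally I multiply by the $\tau$‑Lipschitz cutoff $\chi(\|u\|_{\tau})$ which equals $1$ for $\|u\|_{\tau}\ge\delta$ and $0$ for $\|u\|_{\tau}\le\delta/2$, and set $V=\chi(\|\cdot\|_{\tau})\,\tilde W$, extended by $0$. The decisive point is that this cutoff uses the genuinely $\tau$‑continuous quantity $\|u\|_{\tau}$ rather than $\varphi$, so at no stage do I need the sublevel complement $\{\varphi>c\}$ to be $\tau$‑open; this is the idea borrowed from \cite{CH}.

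I would then define $\eta$ as the flow of $\tfrac{d}{dt}\eta=-V(\eta)$, $\eta(0,u)=u$. Well‑posedness on $[0,T]$, uniqueness, and the fact that $\eta(t,\cdot)$ is a $\tau$‑homeomorphism onto its image with $\tau$‑continuous backward flow follow from ODE theory for the bounded $\tau$‑locally Lipschitz field $V$, giving (i) and (iii); oddness of $V$ yields $\eta(t,-u)=-\eta(t,u)$. Monotonicity (ii) is immediate from $\tfrac{d}{dt}\varphi(\eta)=-\chi(\|\eta\|_{\tau})\langle\varphi'(\eta),\tilde W(\eta)\rangle\le0$. For (iv) I argue dichotomously along a trajectory starting in $B_k\subset\varphi^{d_k}$: while it stays in $E$ one has $\tfrac{d}{dt}\varphi(\eta)\le-\tfrac{\epsilon}{2}$, so $\varphi$ drops below $b_k-\epsilon$ within time $T$; otherwise the trajectory enters $\{\|u\|_{\tau}<\delta/2\}$, where by (\ref{fountain.jihe.1}) and (\ref{fountain.epsilon}) one already has $\varphi<b_k-\epsilon$, and monotonicity keeps it there. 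Property (v) follows because $u-\eta(t,u)=\int_0^{t}V(\eta(s,u))\,ds$ lies, locally in $(t,u)$, in the finite‑dimensional span of the finitely many $z_{u_i}$ active along the ($\tau$‑compact) trajectory segment, by the $\tau$‑local finiteness of $\{\lambda_i\}$.

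The hard part will be twofold. First, securing simultaneously the $\tau$‑continuity, the local finite‑dimensionality, and the downhill property of $V$ on the set $E$, which need not be $\tau$‑open: this is precisely what the restriction to the bounded $D$ (where $\nabla\varphi$ is $\tau$‑continuous and $\tau$ is metric and paracompact) together with the $\|\cdot\|_{\tau}$‑based cutoff is designed to overcome. Second, justifying that the flow of a merely $\tau$‑locally Lipschitz field is well‑posed and $\tau$‑continuous; here the local finite‑dimensionality is essential, since near each point it lets me reduce to an ODE in a finite‑dimensional subspace, apply Picard–Lindel\"of and continuous dependence there, and then patch the local solutions.
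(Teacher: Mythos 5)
Your construction is essentially the paper's own proof: a pseudo-gradient field obtained from $\tau$-open neighbourhoods (supplied by the weak sequential continuity of $\nabla\varphi$ together with (\ref{fountain.1.remark1})), a $\tau$-Lipschitz partition of unity on a paracompact bounded region, symmetrization to get oddness, a cutoff in the genuinely $\tau$-continuous quantity $\|\cdot\|_{\tau}$ rather than in $\varphi$, and the resulting flow; the different normalization of the field (unit vectors with slope $\epsilon/2$ versus the paper's $2\nabla\varphi/\|\nabla\varphi\|^{2}$ with slope $1$) only changes the choice of $T$. The one slip is in your dichotomy for (iv): the decrease rate $\tfrac{d}{dt}\varphi(\eta)\leq -\tfrac{\epsilon}{2}$ is guaranteed only where the cutoff equals $1$, i.e.\ on $\{\|u\|_{\tau}\geq\delta\}$, not on all of $E\supset\{\|u\|_{\tau}\geq\delta/2\}$, so the alternative branch must be ``the trajectory enters $\{\|u\|_{\tau}<\delta\}$'' (not $\{\|u\|_{\tau}<\delta/2\}$); this is harmless because (\ref{fountain.jihe.1}) and (\ref{fountain.epsilon}) already give $\sup_{\|u\|_{\tau}<\delta}\varphi<b_{k}-\epsilon$, after which monotonicity finishes the argument exactly as you say.
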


\begin{proof}
For $\epsilon>0$ given in (\ref{fountain.epsilon}), let
\begin{equation}\label{fountain.R}
    B_{R}=\{u \in X: \|u\|\leq R\}, \text{ where } R= 2(d_k-b_k +2 \epsilon) / \epsilon  +\rho_k +\delta.
\end{equation}
Firstly, we  claim that there exists  a   vector field  $\chi: \varphi^{d_k+1}\rightarrow X$ such that
\begin{description}
  \item[(a)] $\chi$ is odd with $\|\chi(u)\|\leq 2/\epsilon$ and $\langle \nabla \varphi(u),\chi(u)\rangle \leq 0$,   for any   $u\in \varphi^{ d_k+\epsilon}$.
   \item[(b)]   $\chi(u)$ is   locally Lipschitz continuous and $\tau$-locally Lipschitz $\tau$-continuous   on $\varphi^{ d_k+\epsilon}$.
  \item[(c)] $\langle \nabla\varphi(u),\chi(u)\rangle <-1$,   for any  $ u\in \varphi^{-1}[b_{k}-\epsilon,d_k+\epsilon)\bigcap B_{R}$.
  \item[(d)] For any $u\in \mathcal{W}$, $\mathcal{W}$ is given by (\ref{fountain.2.huaW}), there exist a $\tau$-open neighborhood $U_{u} \in \mathcal{N}$
of $u$ such that   $\chi(U_{u})$ is contained in a finite-dimensional subspace of $X$.
\end{description}

In fact, by our  assumption,  $\|\varphi'(u)\|>\epsilon$ for any $u\in E$,
we  may define
$$\omega(u)=\frac{2 \nabla\varphi(u)}{\|\nabla\varphi(u)\|^{2}}, \text{ for }  u\in E\bigcap B_{R},$$
then, there exists a $\tau$-neighborhood $V_{u}\subset X$ of $u$ such that
\begin{equation}\label{fountain.2.xingxing}
    \langle \nabla\varphi(v),  \omega(u) \rangle >1 , \quad \text{for any} \quad v\in V_{u}\bigcap B_{R}.
\end{equation}
Otherwise, if such $V_{u}$ does not exist, then there exists a sequence $\{v_n\}\subset B_{R}$
such that  $v_n \overset{\tau}{\rightarrow} u $ and $\lim\limits_{n\rightarrow\infty}\langle \nabla\varphi(v_n),  \omega(u)\rangle\leq 1$.
By  (\ref{fountain.1.remark1}) we have $v_n\rightharpoonup u$ weakly in $X$ and this leads to a contradiction
since $\nabla\varphi $ is weakly continuous and $\langle \nabla\varphi(u),  \omega(u)\rangle=2$.

Note that $B_{R}$ is $\tau$-closed \cite{CH}, thus $X \backslash B_{R}$ is $\tau$-open, and
\begin{equation}\label{fountain.2.huaN}
    \mathcal{N}=\{V_{u}:u\in E\bigcap B_{R}\} \bigcup \{ X \backslash B_{R}\}
\end{equation}
forms a $\tau$-open covering of $E$.

Since $\mathcal{N}$ is metric, hence paracompact, there exists a local finite $\tau$-open covering
$\mathcal{M}=\{M_{i}:i\in \Lambda \}$, where $\Lambda$ is an index set, of $E$ finer than $\mathcal{N}$. If $M_{i}\subset V_{u_{i}}$
for some $u_{i}\in E$, we choose $\omega_{i}=\omega(u_{i})$ and if $M_{i}\subset X\backslash B_{R}$,
we choose $\omega_{i}=0$. Let $\{\lambda_{i}(u):i \in \Lambda\}$ be a $\tau$-Lipschitz continuous partition
of unity subordinated to $\mathcal{M}$ and let
$$\xi(u)=\sum_{i \in \Lambda}\lambda_{i}(u)\omega_{i}, \quad u\in \mathcal{N}.$$
Since the $\tau$-open covering $\mathcal{M}$ of $\mathcal{N}$ is local finite, each $u\in\mathcal{N}$ belongs
to finite many sets $M_{i}$. Therefore, for every $u\in \mathcal{N}$, the sum $\xi(u)$ is only a finite sum.
It follows that, for any $u\in \mathcal{N}$, there exist a $\tau$-open neighborhood $U_{u} \in \mathcal{N}$
of $u$ such that  $\xi(U_{u})$ is contained in a finite-dimensional subspace of $X$. Then, by the equivalence of  norms   in a finite-dimensional vector space,
we know that there exists $C>0$ such that
\begin{equation}\label{fountain.2.gai1}
     \|\xi(v)-\xi(w)\|\leq    C\|\xi(v)-\xi(w)\|_{\tau}, \quad \forall v,w \in U_{u}.
\end{equation}
On the other hand, by the $\tau$-Lipschitz continuity of $\lambda_i$ and (\ref{fountain.norminequality}), we have that
there exists a constant $L_{u}>0$ such that
\begin{equation}\label{fountain.2.gai2}
    \|\xi(v)-\xi(w)\|_{\tau}\leq L_{u} \|v-w\|_{\tau} \leq L_{u} \|v-w\|, \quad \forall v,w \in U_{u}.
\end{equation}
   Then, from (\ref{fountain.2.gai1}) and (\ref{fountain.2.gai2})
we  know that $\xi(u)$ is locally
Lipschitz continuous and $\tau$-locally Lipschitz $\tau$-continuous. Moreover, by (\ref{fountain.2.xingxing}) and the property of  $\lambda_i$, we also have that
$$\langle \nabla\varphi (u),\xi(u)\rangle >1  \text{ and } \|\xi(u)\|<\frac{2}{\epsilon},
\text{ for any } u\in E\bigcap B_{R}.$$

Since $\varphi$ is even, $\mathcal{N}$ is symmetric, we  define $ \tilde{\xi}(u):=\frac{\xi(u)-\xi(-u)}{2}$ for $u \in \mathcal{N}$, and $\tilde{\xi}(u)$ is odd.
For $\delta>0$ given by (\ref{fountain.jihe.1}), let $\theta \in C^{\infty}(\mathbb{R},[0,1])$ such that

 $$  \theta(t)=
 \left\{
   \begin{array}{ll}
     0,  \quad 0\leq t\leq \frac{2\delta }{3},\\
     1,   \quad t\geq \delta .
   \end{array}
   \right.$$
Define the vector field $\chi: \mathcal{N}\rightarrow X$ by

\begin{equation}\label{fountain.chi}
     \chi(u)=
 \left\{
   \begin{array}{ll}
     -\theta(\|u\|_{\tau}) \tilde{\xi}(u),  \quad u\in \mathcal{N},\\
     0,   \quad \|u\|_{\tau}\leq\frac{2}{3}\delta .
   \end{array}
   \right.
\end{equation}
It's easy to see that $\chi$ is an odd vector field and also well defined on
\begin{equation}\label{fountain.2.huaW}
    \mathcal{W}:=\mathcal{N}\bigcup\{u\in X: \|u\|_{\tau}<\delta \},
\end{equation}
satisfying
\begin{equation}\label{fountain.2.xing}
    \|\chi(u)\|\leq \frac{2}{\epsilon}    \text{ and }
  \langle \nabla \varphi(u),\chi(u)\rangle \leq 0,   \text{ for any } u\in \mathcal{W}.
\end{equation}
Since $0<\epsilon <\frac{1}{2}$, we have $\mathcal{W}$ covers $\varphi^{d_k+\epsilon}\bigcup(X\backslash B_{R})$, this shows $\mathbf{(a)}$.
By the construction of $\chi(u)$, we know that $\chi(u)$ is  locally Lipschitz continuous and $\tau$-locally Lipschitz $\tau$-continuous on $\varphi^{ d_k+\epsilon}$,
and $\mathbf{(b)}$ is proved.

Moreover, by the choice of $\epsilon$  in (\ref{fountain.epsilon}), we have
$$\sup\limits_{\|u\|_{\tau}\leq \delta }\varphi(u)<b_{k}-\epsilon,$$
i.e.,
$$\{u\in X: \|u\|_{\tau}\leq \delta \}\subset \varphi^{b_{k}-\epsilon}.$$
So, by (\ref{fountain.chi}),
\begin{equation}\label{fountain.2.5}
   \langle \nabla\varphi(u),\chi(u)\rangle <-1, \text{ for any } u\in \varphi^{-1}[b_{k}-\epsilon,d_k+\epsilon]\bigcap B_{R},
\end{equation}
which implies $\mathbf{(c)}$.
Then, by the definition of $\chi(u)$, i.e., (\ref{fountain.chi}), and the properties of $\xi(u)$, we see that $\mathbf{(d)}$  holds. So, the claim is proved.

Next, we turn to proving $(\mathbf{i})$-$(\mathbf{v})$ of the lemma. For this purpose, we construct a map $\eta$ through the following Cauchy problem:

\begin{equation}\label{fountain.2.yita}
 \left\{
   \begin{array}{ll}
    \frac{d\eta}{dt}=\chi(\eta)\\
     \eta(0,u)=u \in \mathcal{W} .
   \end{array}
   \right.
\end{equation}

By the standard theory of ordinary differential equation in Banach space, we know that the initial problem has a
unique solution $\eta(t,u)$  on $[0,\infty)$. Furthermore, the similar argument to the proof of \cite[Lemma 6.8]{MW} yields
that $\eta$ is $\tau$-continuous. Moreover, $\eta(t,\cdot) : B_{k}\rightarrow \eta (t, B_{k})$ is a $\tau$-homeomorphism for any $t\in [0,T]$.  So, part $(\mathbf{iii})$ is proved.

Let  $B_k$ and $B_R$ be given by (\ref{fountain.1.2}) and (\ref{fountain.R}). Taking
\begin{equation}\label{fountain.2.daT}
    T=d_{k}-b_{k}+2\epsilon.
\end{equation}
Then,   $\{\eta(t,u): 0\leq t \leq T , u\in B_{k}\}\subset B_{R}$.
Indeed, it follows from (\ref{fountain.2.yita}) that
$$\eta(t,u)=u+\int_{0}^{t} \chi(\eta(s,u))  ds, \text{ for } u \in B_k.$$
By the definition of $d_k$ (see condition $(A_1)$), we know that $B_k\subset \mathcal{W}$. Then, by (\ref{fountain.epsilon}), (\ref{fountain.R}) and (\ref{fountain.2.xing}), we have  for any  $u \in B_k$  and $t\in[0,T]$
\begin{eqnarray*}
  \|\eta(t,u) \|&\leq& \|u\|+ \int_{0}^{t} \|\chi(\eta(s,u)) \| ds \\
     &\leq& \|u\|+ \int_{0}^{t} \frac{2}{\epsilon} ds
     \leq \|u\|+ \frac{2T}{\epsilon}\leq R.
\end{eqnarray*}
So, $(\mathbf{i})$ is obvious by the oddness  of $\chi(u)$.
By $(\mathbf{a})$ we have
\begin{equation*}
    \frac{d}{dt} \varphi(\eta(t,u))=\langle \nabla \varphi(u),\chi(u)\rangle \leq 0, \text{ for any } u \in B_k,
\end{equation*}
so, $\varphi(\eta(t,u))$ is non-increasing in $t\in [0,T]$ for fixed $u\in B_{k}$
and $(\mathbf{ii})$ is proved.

Now, we claim that $\eta(T,B_{k})\subset \varphi^{b_{k}-\epsilon}$. Otherwise, there
exists  $u\in B_{k}$ such that
\begin{equation}\label{fountain.2.6}
\varphi (\eta(T,u))>b_k-\epsilon.
\end{equation}
Since $\eta(t,u)$ is non-increasing along $t$,  we have

\begin{equation}\label{fountain.2.2}
     \eta(t,u)\in  \varphi^{-1}[b_{k}-\epsilon,d_k+\epsilon)\bigcap B_{R}, \text{ for any } t\in [0,T].
\end{equation}
Then, using (\ref{fountain.2.5}) we see that
\begin{eqnarray*}
  \varphi(\eta(T,u) &=& \varphi(\eta(0,u)+ \int_{0}^{T}  \langle \varphi'(\eta(s,u)), \chi(\eta(s,u)) \rangle ds\\
         &\leq& \varphi(\eta(0,u)+ \int_{0}^{T}  -1 ds \\
         &\leq& d_{k}+\epsilon- T=b_{k}-\epsilon,
\end{eqnarray*}
which   contradicts to (\ref{fountain.2.6}), and $(\mathbf{iv})$ is proved.

Finally, by $(\mathbf{d})$ and $(\mathbf{iii})$,   similar   to the proof of Lemma 6.8 of \cite{MW}, we see that
 $(\mathbf{v})$ also holds.
\end{proof}
\\

For $\tau_k$-norm   defined  in (\ref{fountain.tau k norm}),  the same as in \cite{Rup} we introduce the following definition
\begin{definition}
Let $ B_k$ and $N_k $ be defined in  (\ref{fountain.1.2}). For any $T>0$,
the mapping $\gamma : [0,T] \times B_{k}\rightarrow X$ is a $\tau_k$-admissible homotopy if
\begin{itemize}
  \item $\gamma$ is $\tau_k$-continuous in the sense that
             $$\gamma(t_{m},u_{m}) \overset{m}{\rightarrow} \gamma(t,u)  \text{ in }  \tau_k\text{-topology},
          \text{ if } t_{m}\overset{m}{\rightarrow} t \text{ and } u_{m}\overset{m}{\rightarrow} u \text{ in } \tau_k\text{-topology}.$$
  \item  For any $(t,u)\in [0,T] \times B_{k}$ there exist a neighborhood $W_{(t,u)}$ of $(t,u)$ in
          the $|\cdot|\times\tau_k$-topology such that
          $$\{  v-\gamma(s,v)| (s,v)\in  W_{(t,u)}\bigcap  ([0,T] \times B_{k}  )       \}$$
          is contained in a finite-dimensional subspace of $X$.
\end{itemize}

\end{definition}
We remark that such $\gamma$ does  exist since the  identity mapping $I_d: I_d(t,u)\equiv u$ is a $\tau_k$-admissible homotopy.

 Let $ Y_k$ and $Z_k $ be defined in  (\ref{fountain.1.1}),  $ B_k$ and $N_k $  be defined in  (\ref{fountain.1.2}).
 The following intersection lemma is proved in \cite{Rup} where the author estimated the genus of $(\gamma(t,B_{k})\bigcap N_{k} )$
(see also \cite{Ding}).

\begin{lemma}\cite[Proposition 7]{Rup} \label{fountain.intersection}
Let $\varphi \in C^{1}(X,\mathbb{R})$ be an even functional and let $\gamma : [0,T] \times B_{k}\rightarrow X$ be  a $\tau_k$-admissible homotopy  with the following properties:
\begin{itemize}
  \item $\gamma(0,u)=u$, for any $u\in B_{k}$,
  \item   $\gamma(t,-u)=-\gamma(t,u)$,
  \item $\varphi(\gamma(t,u))$ is non-increasing in $t \in [0,T]$ for fixed $u \in B_k$ ,
  \item for any $t\in [0,T]$, $\gamma(t,\cdot) : B_{k}\rightarrow \gamma (t, B_{k})$ is a
          $\tau_k$-homeomorphism.
\end{itemize}
If  $\sup\limits_{u\in Y_{k},\|u\|= \rho_{k}}\varphi(u) < \inf\limits_{u\in Z_{k},\|u\|\leq r_{k}}\varphi(u)$,
with $0< r_{k}<\rho_{k}$ , then
$$ \gamma (t, B_{k})\bigcap N_{k}\neq \emptyset \quad \text{for any}  \quad t\in [0,T],$$
where $B_k$ is given by (\ref{fountain.1.2}).

\end{lemma}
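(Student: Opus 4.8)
The plan is to combine an elementary variational gap estimate with a Krasnoselskii genus computation that is stable along the deformation $\gamma$. First I would use the hypothesis on $\varphi$ together with the monotonicity of $\varphi$ along $\gamma$ to confine every possible intersection point to the interior of $B_k$. Writing $D_k=\{u\in Z_k:\|u\|\le r_k\}$, we have $N_k\subset D_k$ and hence $\inf_{N_k}\varphi\ge\inf_{D_k}\varphi$. If $u\in Y_k$ with $\|u\|=\rho_k$, then $\gamma(0,u)=u$ and the non-increase of $t\mapsto\varphi(\gamma(t,u))$ give $\varphi(\gamma(t,u))\le\varphi(u)\le\sup_{v\in Y_k,\|v\|=\rho_k}\varphi(v)<\inf_{D_k}\varphi\le\inf_{N_k}\varphi$ for every $t\in[0,T]$. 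Thus $\gamma(t,u)\notin N_k$ whenever $\|u\|=\rho_k$, i.e. $\gamma(t,\partial B_k)\cap N_k=\emptyset$ for all $t$. This is the only place the gap hypothesis enters, and it is exactly what makes the intersection a well-posed interior problem.

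Next I would set $S_t:=\gamma(t,B_k)\cap N_k$ and note that $S_t$ is closed, symmetric, and does not contain $0$ (since $0\notin N_k$), so its genus $\mathrm{gen}(S_t)$ is defined; the goal becomes $\mathrm{gen}(S_t)\ge 1$, which forces $S_t\neq\emptyset$. At $t=0$ this is explicit: $\gamma(0,B_k)=B_k$ meets $Z_k$ only along $\mathbb{R}f_k$, so $S_0=\{\pm r_k f_k\}$, a single antipodal pair, of genus $1$. (Note a $\mathbb{Z}$-valued Brouwer degree would be useless here, since the two intersection points carry opposite orientations; the genus is the correct invariant.) To reach a general $t$, I would run the odd deformation $s\mapsto\gamma(s,\cdot)$, $s\in[0,t]$, whose trace never meets $N_k$ on $\partial B_k$ by the first step, and argue that along such an admissible odd homotopy the genus of the intersection cannot drop, so that $\mathrm{gen}(S_t)\ge\mathrm{gen}(S_0)=1$.

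The \emph{main obstacle} is converting this genus comparison into a rigorous statement, because $B_k\subset Y_k$ is infinite-dimensional and the condition $\|Q_k\gamma(t,u)\|=r_k$ is a sphere condition in the infinite-dimensional space $Z_k$. This is where $\tau_k$-admissibility is indispensable: writing $\gamma(t,u)=u-C(t,u)$, the defect $C$ has range in a fixed finite-dimensional subspace $X_0\supset\mathrm{span}\{f_0,\dots,f_k\}$, locally by admissibility and hence on the whole relevant compact set by the $\tau_k$-compactness of $[0,t]\times\overline{B_k}$. The conditions $P_k\gamma(t,u)=0$ and $\|Q_k\gamma(t,u)\|=r_k$ then force the components of $u$ outside $X_0$ to vanish, so the entire intersection problem reduces to the finite-dimensional space $X_0\cap Y_k$, where $u\mapsto(P_k\gamma(t,u),\ \|Q_k\gamma(t,u)\|-r_k)$ is a continuous odd-type map and the standard Borsuk--Ulam/genus machinery applies; homotopy invariance of the genus in this finite-dimensional model, together with the boundary-freeness of the first step, then yields $\mathrm{gen}(S_t)\ge 1$.

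In summary, the elementary pieces are the gap estimate and the explicit computation of $S_0$, while the technical core — verifying that the finite-dimensional reduction is uniform over $s\in[0,t]$ and that the $\tau_k$-continuity and compactness bookkeeping genuinely conserve the genus — is precisely the content of \cite[Proposition 7]{Rup}, which I would invoke for that step. The conclusion $\gamma(t,B_k)\cap N_k\neq\emptyset$ for every $t\in[0,T]$ follows at once from $\mathrm{gen}(S_t)\ge 1$.
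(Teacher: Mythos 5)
The paper does not actually prove this lemma---it is imported verbatim as Proposition~7 of Ruppen's paper \cite{Rup}, with only the remark that Ruppen's argument estimates the genus of $\gamma(t,B_{k})\bigcap N_{k}$. Your sketch (the gap estimate keeping intersections away from $\|u\|=\rho_k$, the computation $\mathrm{gen}(S_0)=\mathrm{gen}(\{\pm r_k f_k\})=1$ using $Y_k\cap Z_k=\mathbb{R}f_k$, the finite-dimensional reduction via $\tau_k$-admissibility and the $\tau_k$-compactness of $[0,t]\times \overline{B_k}$, and the correct observation that a $\mathbb{Z}$-valued degree would vanish here) follows exactly that route and defers the genuinely technical homotopy-invariance step to the same citation, so it is consistent with---indeed more explicit than---the paper's treatment.
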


Now we come to prove Lemma \ref{fountain.guodu}.

\begin{proof}[Proof of Lemma \ref{fountain.guodu}]

By contradition, if the conclusion of Lemma \ref{fountain.guodu} is false, then, there exists $\epsilon>0$
such that
$$\|\varphi'(u)\|>\epsilon, \text{ for any } u\in E,$$
where $E$ is defined by (\ref{fountain.definition of E}). By Lemma \ref{fountain.Deformation Lemma} we know that
there exists a map $\eta(t,u) \in C([0,T] \times B_{k},X)$ satisfying  Lemma \ref{fountain.Deformation Lemma} $(\mathbf{i})$-$(\mathbf{v})$.
By Remark \ref{fountain.remark2},  the $\tau$-topology and $\tau_k$-topology are equivalent,
then it is  easy to see that $\eta(t,u)$  satisfies the   assumptions of Lemma \ref{fountain.intersection},
hence,
$$\eta(T,B_{k})\bigcap N_{k}\neq \emptyset,$$
and the definition of $b_k$ implies that
$$ \sup\limits_{u\in B_{k}}\varphi(\eta(T,u)) \geq b_{k}.$$
However,   Lemma \ref{fountain.Deformation Lemma} $(\mathbf{iv})$ shows that
$$ \sup\limits_{u\in B_{k}}\varphi(\eta(T,u)) \leq b_{k}-\epsilon,$$
which leads to a contradiction. So, the proof is complete.

\end{proof}

\begin{proof}[ Proof of Theorem \ref{fountain.Fountain Thm}]

Taking $\delta_1>0$, by $(A_4)$ we know that
$$\sup\limits_{\|u\|_{\tau}<\delta_{1}}\varphi(u)\leq C_{\delta_1},$$
for some $C_{\delta_1}>0$.
Then condition $(A_3)$ implies that there exists   $k_{1}\in\mathbf{N}$ sufficiently large such that
$$ b_{k_{1}}>\sup\limits_{\|u\|_{\tau}<\delta_{1}}\varphi(u). $$
By Lemma \ref{fountain.guodu}, we know that there exists a sequence $\{u_{n}^{k_{1}}\}$ satisfies that
$$ \varphi'(u_{n}^{k_{1}})\overset{n}{\rightarrow} 0  \text{ in } X', \quad
 \sup\limits_{n}\varphi(u_{n}^{k_{1}})<d_{k_{1}}+1
\text{ and }
  \inf\limits_{n}\|u_{n}^{k_{1}}\|_{\tau}\geq\frac{\delta_{1}}{2}.$$
Since $\varphi$ satisfies the  $(PS)^c$ condition,  $\{u_{n}^{k_{1}}\}$ has a subsequence which is
convergent to a critical point $u^{k_1}$ of $\varphi$ with $\|u^{k_1}\|\geq\|u^{k_1}\|_{\tau}\geq \frac{\delta_{1}}{2}. $

Now, we take $\delta_{2}>2 \|u^{k_1}\| $ and similar to the above there exists $k_{2} >k_1$ large enough such that
$$ b_{k_{2}}>\sup\limits_{\|u\|_{\tau}<\delta_{2}}\varphi(u),$$
and we can find the second critical point $u^{k_2}$ with $\|u^{k_2}\|\geq \frac{\delta_2}{2}>\|u^{k_1}\|$. Clearly, $u^{k_2} \neq u^{k_1}$.

Repeat the above procedures, we get a sequence of critical points $\{u^{k_m}\}$ with
\begin{center}
    $\lim\limits_{m\rightarrow\infty}\|u^{k_m}\| \rightarrow  \infty$.
\end{center}
So, the theorem is proved.
\end{proof}

\section{An application}
\noindent

The aim of this section is to apply Theorem \ref{fountain.Fountain Thm}  to prove the existence of infinitely many solutions of problem (\ref{fountain.P}).
  In this section,   $X=H^{1}(\mathbb{R}^{N})$, $N\geq3$   with the norm $\| u \|_{H^1}=(\int_{\mathbb{R}^{N}}
 (|u |^{2} + | \nabla u|^{2}) dx)^{\frac{1}{2}}$.
 $L^{p}(a(x),\mathbb{R}^{N})$ is the Lebesgue space with positive weight $a(x)$
 endowed with the norm $|u|_{L^{p}_{a(x)}}:=
 (\int_{\mathbb{R}^{N}} a(x)| u |^{p} dx)^{\frac{1}{p}}$, and this norm is simply denoted by $|u|_{L^p}$ if $a(x)\equiv 1$.
 For $r>0$, $B(x,r)=\{x \in \mathbb{R}^N : |x|<r \}$.

The variational  functional of (\ref{fountain.P}) is defined by
\begin{equation}
 \varphi(u):=\frac{1}{2}\int_{\mathbb{R}^{N}}(  | \nabla u|^{2}+V(x)| u |^{2}) dx
    - \frac{1}{q}\int_{\mathbb{R}^{N}} g(x)| u | ^{q}dx
     -\frac{1}{p}\int_{\mathbb{R}^{N}} h(x)| u | ^{p}dx,
\end{equation}
for $ u \in H^{1}(\mathbb{R}^{N})$.
By $(H_{1})$-$(H_{3})$,  $\varphi (u) \in C^1(H^{1}(\mathbb{R}^{N}))$
and
\begin{equation}
 \varphi' (u)\phi=\int_{\mathbb{R}^{N}}\Big(\nabla u\nabla\phi
+V(x)u \phi \Big)dx-\int_{\mathbb{R}^{N}}g(x)| u |^{q-2}u\phi dx
-\int_{\mathbb{R}^{N}}h(x)| u |^{p-2}u\phi dx,
\end{equation}
for any $\phi\in H^{1}(\mathbb{R}^{N})$.
Moreover,  $\varphi'$ is weakly sequentially continuous by \cite[Theorem A.2]{MW}.

Let  $L:=-\Delta+V(x)$ be the Schr$\ddot{\text{o}}$dinger operator acting on $L^{2}(\mathbb{R}^N)$ with domain $\mathcal{D}(L)=H^{2}(\mathbb{R}^N)$.
Since $L$ is self-adjoint and  $0$ lies in a gap of the spectrum of $L$,
by the standard spectral theory we know that
 the space $H^{1}(\mathbb{R}^{N})$ can be decomposed as
 $H^{1}(\mathbb{R}^{N})=Y\bigoplus Z$
such that the quadratic form:
\begin{equation}
u\in H^{1}(\mathbb{R}^{N}) \rightarrow \int_{\mathbb{R}^{N}}
 (  | \nabla u|^{2}+V(x)| u |^{2}) dx
\end{equation}
is negative and positive definite on $Y $and $ Z$ respectively, and both $Y$ and $Z$
may be infinite-dimensional.
Let $X=\mathcal{D}(|L|^{\frac{1}{2}})$ be equipped with the inner product
\begin{equation}
 \langle  u , v\rangle _{1}:=    \langle |L|^{\frac{1}{2}}u,   |L|^{\frac{1}{2}}v\rangle _{L^2}    ,
\end{equation}
and norm
\begin{equation*}
    \|u\|:=\langle |L|^{\frac{1}{2}}u,   |L|^{\frac{1}{2}}u \rangle ^{\frac{1}{2}}_{L^2},
\end{equation*}
where $\langle \cdot,\cdot \rangle_{L^2}$ is the usual inner product in $L^{2}(\mathbb{R}^{N})$.
By the condition $(H_{1})$, similar to the appendix of \cite{cos}, we know that  $X=H^{1}(\mathbb{R}^{N})$ and  the norms $\| \cdot\|$ and $\| \cdot\|_{H^1}$ are equivalent.
Moreover, $Y$ and $Z$ are also orthogonal with respect to $ \langle \cdot,\cdot\rangle_{1} $.

Let $P :X\rightarrow Y$ and $Q :X \rightarrow Z $  be the orthogonal projections,
  $(3.1)$  can be written as
\begin{equation}
 \varphi(u):=\frac{1}{2}(-\| Pu \|^{2}+\| Qu \|^{2})
    - \frac{1}{q}\int_{\mathbb{R}^{N}} g(x)| u | ^{q}dx
     -\frac{1}{p}\int_{\mathbb{R}^{N}} h(x)| u | ^{p}dx.
\end{equation}

Now,  we set
\begin{center}
   $ Y_{k}:=Y \bigoplus (\bigoplus_{j=0}^{k}\mathbb{R}f_{j})$
and
$ Z_{k}:=\overline{\bigoplus_{j=k}^{\infty}\mathbb{R}f_{j}}$,
\end{center}
where $\{f_{j}\}_{j\geq0}$ is an orthonormal basis of $(Z,\parallel\cdot\parallel)$.

Before proving Theorem \ref{fountain.existence result}, we give some useful lemmas. The first lemma is the following embedding result which has been used in many papers (see, e.g.,
\cite{Tonkes}).  Here we give a simple proof for completeness.

\begin{lemma}\label{fountain.compact embedding}
If $1<q<2^{\ast}$ and $a(x)\in L^{q_{0}}(\mathbb{R}^{N})
\bigcap L^{\infty}(\mathbb{R}^{N})$ with $a(x)\geq 0$ a.e. in $\mathbb{R}^{N}$, where $q_{0}=\frac{2N}{2N-qN+2q}$. Then,
$H^{1}(\mathbb{R}^{N})\hookrightarrow L^{q}(a(x),\mathbb{R}^{N})$ is compact.
\end{lemma}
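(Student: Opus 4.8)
The plan is to first record the elementary Hölder bookkeeping that makes the embedding well defined, and then to prove compactness by the standard ``ball plus tail'' splitting, using the $L^{q_0}$-integrability of $a$ to control the part over the complement of a large ball uniformly, and the Rellich--Kondrachov theorem to control the part over the ball itself.

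First I would pin down the role of the exponent $q_0$. Writing $q_0'$ for its conjugate, $\frac1{q_0}+\frac1{q_0'}=1$, a direct computation from $q_0=\frac{2N}{2N-qN+2q}$ gives $\frac1{q_0'}=\frac{q(N-2)}{2N}$, hence $q\,q_0'=\frac{2N}{N-2}=2^{\ast}$. Thus Hölder's inequality with exponents $q_0$ and $q_0'$ yields, on any measurable set $\Omega\subset\mathbb{R}^N$,
$$\int_{\Omega} a(x)|u|^q\,dx \le \Big(\int_{\Omega} a^{q_0}\,dx\Big)^{1/q_0}\Big(\int_{\Omega}|u|^{2^{\ast}}\,dx\Big)^{1/q_0'} \le \Big(\int_{\Omega} a^{q_0}\,dx\Big)^{1/q_0}\,|u|_{L^{2^{\ast}}}^{q}.$$
Taking $\Omega=\mathbb{R}^N$ and combining with the Sobolev embedding $H^1(\mathbb{R}^N)\hookrightarrow L^{2^{\ast}}(\mathbb{R}^N)$ shows that $H^1\hookrightarrow L^q(a(x),\mathbb{R}^N)$ is well defined and continuous.

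For compactness it suffices to prove that $u_n\rightharpoonup 0$ weakly in $H^1(\mathbb{R}^N)$ forces $\int_{\mathbb{R}^N} a|u_n|^q\,dx\to 0$: indeed, any bounded sequence in $H^1$ admits a weakly convergent subsequence $u_{n_k}\rightharpoonup u$, and applying this claim to $v_k:=u_{n_k}-u\rightharpoonup 0$ gives $|u_{n_k}-u|_{L^q_{a}}\to 0$. So let $u_n\rightharpoonup 0$ with $\|u_n\|_{H^1}\le M$. Fixing $\varepsilon>0$, I would first use $a\in L^{q_0}$ to choose $R>0$ with $\big(\int_{|x|>R}a^{q_0}\,dx\big)^{1/q_0}<\varepsilon$; then the displayed estimate with $\Omega=\{|x|>R\}$, together with the boundedness of $\{u_n\}$ in $L^{2^{\ast}}$, bounds the tail by $\varepsilon\,|u_n|_{L^{2^{\ast}}}^q\le C\varepsilon$ \emph{uniformly} in $n$. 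On the ball $B(0,R)$ I would use $a\in L^{\infty}$ to write $\int_{B(0,R)}a|u_n|^q\,dx\le |a|_{L^{\infty}}\int_{B(0,R)}|u_n|^q\,dx$, and since $1<q<2^{\ast}$ the Rellich--Kondrachov theorem gives the compact embedding $H^1(B(0,R))\hookrightarrow L^q(B(0,R))$, so $u_n\rightharpoonup 0$ in $H^1(B(0,R))$ implies $\int_{B(0,R)}|u_n|^q\,dx\to 0$. Hence $\limsup_n\int_{\mathbb{R}^N}a|u_n|^q\,dx\le C\varepsilon$, and letting $\varepsilon\to 0$ finishes the proof.

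There is no deep difficulty here; the only genuinely delicate point is securing the tail bound \emph{uniformly} in $n$, which is exactly what the global integrability hypothesis $a\in L^{q_0}$ (rather than merely $a\in L^{\infty}$) provides, in tandem with the precise matching $q\,q_0'=2^{\ast}$ that lands Hölder's inequality on the Sobolev exponent. Once $a$ is known bounded, the contribution over the ball is routine Rellich--Kondrachov.
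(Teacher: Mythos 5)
Your proof is correct and follows essentially the same route as the paper's: the identity $q\,q_0'=2^{\ast}$ feeding Hölder into the Sobolev embedding for continuity, and then the ball-plus-tail splitting with the $L^{q_0}$ tail of $a$ controlling the exterior uniformly and local compactness (Rellich--Kondrachov) handling the ball. The only differences are cosmetic (you reduce to weakly null sequences and run a $\limsup$-in-$\varepsilon$ argument, while the paper sends $n\to\infty$ and then $R\to\infty$ directly).
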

\begin{proof}
For $u\in H^{1}(\mathbb{R}^{n})$, by the H$\ddot{\text{o}}$lder   and Sobolev inequalities, we see that
\begin{eqnarray*}
  \int_{\mathbb{R}^{N}}a(x)| u |^{q} dx &\leq& | a(x)|_{L^{q_{0}}}\cdot \left(\int_{\mathbb{R}^{N}}| u |^{\frac{2N}{N-2}} dx\right)^{\frac{qN-2q}{2N}}\\
   &=&   | a(x)|_{L^{q_{0}}}\cdot | u | ^{q}_{L^{2^{\ast}}(\mathbb{R}^{N})}  \leq   C \| u \|^{q},
\end{eqnarray*}
that is,
 $ |u|_{L^{q}_{a(x)} }\leq  C \| u \| $,
 which means that
 $H^{1}(\mathbb{R}^{N})\hookrightarrow L^{q}(a(x),\mathbb{R}^{N})$.
Let $\{u_{n}\}$ be a bounded sequence in $H^{1}(\mathbb{R}^{N})$,
passing to a subsequence, we may assume that, for some $u \in H^{1}(\mathbb{R}^{N})$,
\begin{equation*}
    u_n \overset{n}{\rightharpoonup} u \text{ weakly  in } H^{1}(\mathbb{R}^{N}) \text{ and } u_n \overset{n}{\rightarrow} u  \text{ in } L^q_{loc}(\mathbb{R}^{N}), \text{ for } 1<q<2^*.
\end{equation*}
To show that $H^{1}(\mathbb{R}^{N})\hookrightarrow L^{q}(a(x),\mathbb{R}^{N})$ is compact, we need only to show that $u_n$ strongly converges to $u$
in $L^{q}(a(x),\mathbb{R}^{N})$ for $q \in (1,2^*)$. In fact, for any $R>0$,

\begin{eqnarray*}
 && \int_{\mathbb{R}^{N}} a(x)| u_{n}-u|^{q}dx = \int_{\mathbb{R}^{N}\backslash B(0,R)} a(x)| u_{n}-u|^{q}dx
+\int_{B(0,R)} a(x)| u_{n}-u|^{q}dx \\
   & \leq& \left(\int_{\mathbb{R}^{N} \backslash B(0,R)}| a(x) |^{q_{0}} dx\right)^{\frac{1}{q_{0}}} | u_{n}-u|_{L^{2^{\ast}}(\mathbb{R}^{N})}
   + | a|_{L^{\infty}(\mathbb{R}^{N})} \int_{B(0,R)} | u_{n}-u|^{q} dx \\
   && \rightarrow 0,
\end{eqnarray*}
by letting $n \rightarrow +\infty$ and then $R \rightarrow +\infty$.
\end{proof}

\begin{lemma}
Under condition $(H_{3})$,  let
$$\beta_{k}:=\sup\limits_{u\in Z_{k},\| u \|=1} | u |_{L_{h(x)}^{p}}, \text{ for any } k \in \mathbf{N},$$
then,
 $\beta_{k}\rightarrow 0$  as    $k\rightarrow\infty$.
\end{lemma}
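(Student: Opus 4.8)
The plan is to argue by contradiction after first reducing to a single limit. Since $Z_{k+1}\subset Z_k$, the sequence $\{\beta_k\}$ is nonincreasing and clearly nonnegative, so it converges to some $\beta\geq 0$; it suffices to prove $\beta=0$. Because the supremum defining $\beta_k$ need not be attained, I would choose near-maximizers: for each $k$ pick $u_k\in Z_k$ with $\|u_k\|=1$ and $|u_k|_{L^{p}_{h(x)}}\geq \beta_k-\frac{1}{k}$. The constraint $\|u_k\|=1$ makes $\{u_k\}$ bounded in $X=H^{1}(\mathbb{R}^N)$, so after passing to a subsequence $u_k\rightharpoonup u$ weakly; since $\|\cdot\|$ and $\|\cdot\|_{H^1}$ are equivalent, this weak convergence holds simultaneously for $\langle\cdot,\cdot\rangle_1$ and for the usual $H^{1}$ inner product.

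The key step is to identify the weak limit as $u=0$. Fix $j\in\mathbf{N}$. For every $k>j$ we have $u_k\in Z_k=\overline{\bigoplus_{m\geq k}\mathbb{R}f_m}$, hence $\langle u_k,f_j\rangle_1=0$ by orthonormality of $\{f_m\}$. Passing to the weak limit gives $\langle u,f_j\rangle_1=0$ for every $j$. Since each $u_k$ lies in the closed (hence weakly closed) subspace $Z$, we also have $u\in Z$, and as $\{f_j\}_{j\geq 0}$ is an orthonormal basis of $Z$, the vanishing of all coefficients forces $u=0$.

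Finally I would invoke the compactness from Lemma \ref{fountain.compact embedding}, applied with weight $a=h$ and exponent $p$ in place of $q$: condition $(H_{3})$ provides $h\in L^{p_{0}}(\mathbb{R}^N)\cap L^{\infty}(\mathbb{R}^N)$ with $h>0$ a.e. and $p_{0}=\frac{2N}{2N-pN+2p}$, while $1<p<2^{\ast}$, so $H^{1}(\mathbb{R}^N)\hookrightarrow L^{p}(h(x),\mathbb{R}^N)$ is compact. Therefore $u_k\to u=0$ strongly in $L^{p}(h(x),\mathbb{R}^N)$, i.e. $|u_k|_{L^{p}_{h(x)}}\to 0$. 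Letting $k\to\infty$ in $\beta_k-\frac{1}{k}\leq |u_k|_{L^{p}_{h(x)}}$ yields $\beta\leq 0$, so $\beta=0$ and hence $\beta_k\to 0$. The only genuinely delicate point is the identification $u=0$, which rests precisely on the orthogonality $u_k\perp f_j$ for $k>j$ together with $\{f_j\}$ being a basis of $Z$; the rest is the routine mechanism that weak convergence combined with a compact embedding forces the limit to vanish.
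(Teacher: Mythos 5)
Your argument is correct and is essentially the paper's own proof: monotonicity gives $\beta_k\to\beta\geq 0$, near-maximizers $u_k\in Z_k$ converge weakly to $0$ (the paper asserts this "by the definition of $Z_k$", which you justify with the orthogonality $\langle u_k,f_j\rangle_1=0$ for $k>j$), and the compact embedding of Lemma 3.1 with weight $h$ and exponent $p$ forces $|u_k|_{L^p_{h(x)}}\to 0$, hence $\beta=0$. Your version is in fact slightly cleaner, since it correctly names the target space $L^{p}(h(x),\mathbb{R}^N)$ where the paper's text has a typo ($L^q_{g(x)}$).
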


\begin{proof}
It is clear that $0<\beta_{k+1}\leq\beta_{k}$, and then $\beta_{k} \rightarrow\beta \geq 0$ as $k \rightarrow \infty$.
For every $k\geq0$, there exists $u_{k}\in Z_{k}$ with $\| u_{k} \|=1 $ and $| u_{k}|_{L^{p}_{h(x)}}\geq\frac{\beta_{k}}{2}$.
By the definition of $Z_{k}$,  we have $u_{k}\overset{k}{\rightharpoonup} 0$ in $H^{1}(\mathbb{R}^{N})$. Thus Lemma  \ref{fountain.compact embedding} implies that
 $u_{k}\overset{n}{\rightarrow} 0$ strongly in $L_{g(x)}^{q}$, then, $\beta=0$.
\end{proof}

\begin{lemma}\label{fountain.PSc}
If $(H_{1})-(H_{3})$ hold,
then
$\varphi$ satisfies  $(PS)^c$ condition in $H^1(\mathbb{R}^N)$ for any $c<+\infty$.
\end{lemma}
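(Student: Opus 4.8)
The plan is to verify the two defining features of a $(PS)^c$ sequence in turn: first that any $\{u_n\}\subset H^1(\mathbb{R}^N)$ with $\sup_n\varphi(u_n)\le c$ and $\varphi'(u_n)\to 0$ in $X'$ is bounded, and then that a subsequence converges strongly. Throughout I would work from $\varphi'(u_n)v=\langle -Pu_n+Qu_n,v\rangle-\int_{\mathbb{R}^N} g|u_n|^{q-2}u_n v\,dx-\int_{\mathbb{R}^N} h|u_n|^{p-2}u_n v\,dx$, use $|\varphi'(u_n)v|\le o(1)\|v\|$, and invoke the compact embeddings $H^1(\mathbb{R}^N)\hookrightarrow L^q(|g|,\mathbb{R}^N)$ and $H^1(\mathbb{R}^N)\hookrightarrow L^p(h,\mathbb{R}^N)$ of Lemma \ref{fountain.compact embedding}, applied to the nonnegative weights $|g|$ and $h$. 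In particular $\int_{\mathbb{R}^N}g|u_n|^q\,dx\le\int_{\mathbb{R}^N}|g||u_n|^q\,dx\le C\|u_n\|^q$, so the sign change of $g$ causes no trouble.

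The heart of the argument, and the step I expect to be the main obstacle, is boundedness, since the quadratic part $-\|Pu\|^2+\|Qu\|^2$ is strongly indefinite and the sublinear term has a sign-changing weight. I would first form $2\varphi(u_n)-\varphi'(u_n)u_n$, in which the indefinite quadratic form cancels completely, leaving $(1-\tfrac2q)\int_{\mathbb{R}^N}g|u_n|^q\,dx+(1-\tfrac2p)\int_{\mathbb{R}^N}h|u_n|^p\,dx$. Because $p>2$ gives $1-\tfrac2p>0$, $h>0$, and the $g$-integral is controlled by $C\|u_n\|^q$, this yields $\int_{\mathbb{R}^N}h|u_n|^p\,dx\le C(1+\|u_n\|+\|u_n\|^q)$. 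Next I would test with $-Pu_n$ and with $Qu_n$; the orthogonality $\langle Pu_n,Qu_n\rangle=0$ isolates $\|Pu_n\|^2$ and $\|Qu_n\|^2$, while Hölder's inequality in the weighted spaces bounds the nonlinear contributions by $C\|u_n\|^{q-1}\|Pu_n\|$ and $C\big(\int_{\mathbb{R}^N}h|u_n|^p\,dx\big)^{(p-1)/p}\|Pu_n\|$ (and likewise for $Q$). Summing gives $\|u_n\|\le o(1)+C\|u_n\|^{q-1}+C\big(\int_{\mathbb{R}^N}h|u_n|^p\,dx\big)^{(p-1)/p}$, and inserting the previous bound produces $\|u_n\|\le C\big(1+\|u_n\|^{q-1}+\|u_n\|^{(p-1)/p}+\|u_n\|^{q(p-1)/p}\big)$. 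The decisive point is that all three exponents are strictly below $1$: $q-1<1$ and $(p-1)/p<1$ are automatic, while $q(p-1)/p<1$ is precisely the hypothesis $q<\frac{p}{p-1}$. Hence $\{u_n\}$ is bounded.

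With boundedness in hand I would pass to $u_n\rightharpoonup u$ in $H^1(\mathbb{R}^N)$; by Lemma \ref{fountain.compact embedding} the projections obey $Pu_n\to Pu$ and $Qu_n\to Qu$ strongly in both $L^q(|g|)$ and $L^p(h)$. To upgrade to strong convergence in $X$ I would evaluate $\varphi'(u_n)(Qu_n-Qu)$: since $Q(u_n-u)\in Z$, orthogonality reduces the quadratic part to $\langle Qu_n,Qu_n-Qu\rangle$, Hölder together with the strong $L^q(|g|)$ and $L^p(h)$ convergence of $Qu_n$ sends both nonlinear terms to $0$, and $\varphi'(u_n)\to 0$ kills the left side; as $\langle Qu,Qu_n-Qu\rangle\to 0$ by weak convergence, this forces $\|Qu_n-Qu\|^2\to 0$. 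Repeating the computation with the test element $Pu_n-Pu$ gives $\|Pu_n-Pu\|^2\to 0$, whence $\|u_n-u\|^2=\|Pu_n-Pu\|^2+\|Qu_n-Qu\|^2\to 0$. The only care needed beyond routine estimates is tracking which weighted $L^r$-norm governs each nonlinear term, so that the compactness of Lemma \ref{fountain.compact embedding} applies cleanly.
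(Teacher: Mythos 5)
Your proposal is correct and follows essentially the same route as the paper: boundedness via $\varphi(u_n)-\tfrac12\varphi'(u_n)u_n$ to control $\int h|u_n|^p$, then testing with the $Y$- and $Z$-components and using $q<\tfrac{p}{p-1}$ to force sublinear growth, and finally strong convergence of each projection via the compact embeddings of Lemma \ref{fountain.compact embedding} applied to the weights $|g|$ and $h$. The only cosmetic difference is that you evaluate $\varphi'(u_n)(Qu_n-Qu)$ where the paper uses $\langle\varphi'(u_n)-\varphi'(u),\,\cdot\,\rangle$, which is an equivalent device.
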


\begin{proof}
Let   $\{u_{n}\} \subset H^1(\mathbb{R}^N)$ be any sequence satisfying
  $$\sup\limits_{n}\varphi(u_{n})\leq c \text{ and } \varphi'(u_{n}) \overset{n}{\rightarrow} 0.$$
We claim that $\{u_{n}\}$ is bounded in $H^{1}(\mathbb{R}^{N})$.
Indeed, for $n$ large, there holds
\begin{eqnarray*}
  c+1+\| u_{n}\|  &\geq&\varphi(u_{n}) -  \frac{1}{2}\langle \nabla\varphi(u_{n}),u_{n}\rangle\\
   &=&  (\frac{1}{2}-\frac{1}{p})\int_{\mathbb{R}^{N}}h(x)| u_n|^{p}dx
+(\frac{1}{2}-\frac{1}{q}) \int_{\mathbb{R}^{N}}g(x)| u_n|^{q}dx,
\end{eqnarray*}
that is,
\begin{eqnarray}
 \nonumber   \int_{\mathbb{R}^{N}}h(x)| u_n|^{p}dx  &\leq& C+\| u_{n}\|+C\int_{\mathbb{R}^{N}}|g(x)|| u_n|^{q}dx.\\
            &\leq& C+\| u_{n}\|+ C\| u_{n}\|^{q}. \label{fountain.3.6}
\end{eqnarray}
Let $u_{n}=y_{n}+z_{n}$, with $y_{n} \in Y ,z_{n} \in Z $. For $n$ large,
\begin{equation*}
    \| z_{n}\| \geq \langle \varphi'(u_{n}),z_{n}\rangle
  =  \| z_{n}\|^{2}-\int_{\mathbb{R}^{N}}g(x)| u|^{q-2}u  z_{n} dx
-\int_{\mathbb{R}^{N}}h(x)| u|^{p-2}u z_{n} dx,
\end{equation*}
thus
\begin{equation*}
    \| z_{n}\|^{2}\leq \| z_{n}\|+\int_{\mathbb{R}^{N}}g(x)| u|^{q-1}  z_{n} dx
+ \int_{\mathbb{R}^{N}}h(x)| u|^{p-1} z_{n} dx.
\end{equation*}
By H$\ddot{\text{o}}$lder inequality and  Sobolev embeddings, we see that, for some $C>0$,
\begin{eqnarray*}
  \| z_{n}\|^{2} & \leq& \| z_{n}\|+
|g(x)^{\frac{q-1}{q}}u_{n}^{q-1}|_{L^{\frac{q}{q-1}}} |g(x)^{\frac{1}{q}}z_{n}^{q-1}|_{L^{\frac{q}{q-1}}}
+|h(x)^{\frac{p-1}{p}}u_{n}^{p-1}|_{L^{\frac{p}{p-1}}} |h(x)^{\frac{1}{p}}z_{n}^{p-1}|_{L^{\frac{p}{p-1}}}\\
   &=& \| z_{n}\|+
| u_{n} |^{q-1}_{L^{q}_{g(x)}} | z_{n} |_{L^{q}_{g(x)}}
+\big(\int_{\mathbb{R}^{N}}h(x)|u|^{p} dx\big)^{\frac{p-1}{p}} |z_{n}|_{L^{p}_{h(x)}}\\
   &\leq& \|z_{n}\| + C\|u_{n}\|^{q-1}\| z_{n}\|
+ C(1+\|u_{n}\|+\|u_{n}\|^{q})^{\frac{p-1}{p}} \| z_{n}\|, \text{ by } (\ref{fountain.3.6}),\\
   &\leq&\| u_{n}\| + C\| u_{n}\|^{q}
+ C(1+\|u_{n}\|+\|u_{n}\|^{q})^{\frac{p-1}{p}} \| u_{n}\|.
\end{eqnarray*}
Similarly, it follows from $\|y_{n}\| \geq -\langle \varphi'(u_{n}),y_{n}\rangle$ that
\begin{eqnarray*}
  \| y_{n}\|^{2} & \leq& \| y_{n}\|+
|g(x)^{\frac{q-1}{q}}u_{n}^{q-1}|_{L^{\frac{q}{q-1}}}|g(x)^{\frac{1}{q}}y_{n}^{q-1}|_{L^{\frac{q}{q-1}}}
+|h(x)^{\frac{p-1}{p}}u_{n}^{p-1}|_{L^{\frac{p}{p-1}}}|h(x)^{\frac{1}{p}}y_{n}^{p-1}|_{L^{\frac{p}{p-1}}}\\
   &\leq& \| u_{n}\| + C\| u_{n}\|^{q}
+ C (1+\|u_{n}\|+\|u_{n}\|^{q})^{\frac{p-1}{p}} \| u_{n}\|.
\end{eqnarray*}
Since $\|u_{n}\|^{2}=\| y_{n}\|^{2}+\| z_{n}\|^{2}$ ,
the above conclusions show that
$$  \| u_{n}\|^{2} \leq 2\| u_{n}\| + C\| u_{n}\|^{q}
+ C (1+\|u_{n}\|+\|u_{n}\|^{q})^{\frac{p-1}{p}} \| u_{n}\|.$$
Thus,  by $q<\frac{p}{p-1}$, we know $\{u_{n}\}$ is bounded in $H^{1}(\mathbb{R}^{N})$.

By the boundedness of $\{u_{n}\}$, we may assume, up to a subsequence, that
\begin{center}
    $y_{n}\overset{n}{\rightharpoonup} y$ in $H^{1}(\mathbb{R}^{N})$
\text{ and }
    $z_{n} \overset{n}{\rightharpoonup} z$ in $H^{1}(\mathbb{R}^{N}).$
\end{center}
Let $u=y+z$, we get that
\begin{center}
    $\langle \nabla\varphi(u_{n})-\nabla\varphi(u),y_{n}-y\rangle\rightarrow 0$  as $n\rightarrow\infty$.
\end{center}
\begin{eqnarray*}
  \langle \varphi'(u_{n})-\varphi'(u),y_{n}-y\rangle=-\|y_{n}-y\|^{2} &+&
\int_{\mathbb{R}^{N}}g(x)(|u|^{q-2}u-|u_{n}|^{q-2}u_{n})(y_{n}-y)dx \\
   &+& \int_{\mathbb{R}^{N}}h(x)(|u|^{p-2}u-|u_{n}|^{p-2}u_{n})(y_{n}-y)dx.
\end{eqnarray*}

Using the H$\ddot{\text{o}}$lder inequality, we see that
\begin{center}
    $y_{n} \overset{n}{\rightarrow} y$ in $H^{1}(\mathbb{R}^{N}).$
\end{center}
Similarly,
\begin{center}
    $z_{n} \overset{n}{\rightarrow} z$ in $H^{1}(\mathbb{R}^{N}).$
\end{center}
Hence,
\begin{center}
    $u_{n} \overset{n}{\rightarrow} u$ in $H^{1}(\mathbb{R}^{N}).$
\end{center}
So, we proved that $\varphi$ satisfies the  $(PS)^c$ condition for any $c<+\infty$.

\end{proof}

\begin{lemma}\label{fountain.lemma3.4}
Under the conditions $(H_{1})-(H_{3})$,
 for any $\delta>0$, there exists $C_\delta<\infty$  such that $\sup\limits_{\|u\|_{\tau}<\delta }\varphi(u) <C_\delta $.

\end{lemma}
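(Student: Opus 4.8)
The plan is to exploit the favourable signs of the indefinite quadratic part and of the super-linear term, so that the only potentially large contribution — the sign-changing sub-linear term $\frac{1}{q}\int g|u|^q$ — can be absorbed into negative quantities. First I would write $u = Pu + Qu =: y + z$ with $y \in Y$, $z \in Z$, and observe that $\|u\|_\tau < \delta$ forces $\|z\| = \|Qu\| < \delta$ directly from the definition (\ref{fountain.tau norm}); crucially, no bound on $\|y\| = \|Pu\|$ is available, which is exactly the difficulty hidden in this lemma. Using the orthogonal form of the functional, namely $\varphi(u) = -\frac{1}{2}\|y\|^2 + \frac{1}{2}\|z\|^2 - \frac{1}{q}\int_{\mathbb{R}^N} g|u|^q\,dx - \frac{1}{p}\int_{\mathbb{R}^N} h|u|^p\,dx$, I would immediately discard the two manifestly nonpositive terms: $\frac{1}{2}\|z\|^2 < \frac{1}{2}\delta^2$ is bounded, while $-\frac{1}{p}\int h|u|^p \le 0$ because $h>0$ a.e. by $(H_3)$.

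The remaining task is to estimate $-\frac{1}{q}\int g|u|^q \le \frac{1}{q}\int |g|\,|u|^q$. Here I would invoke Lemma \ref{fountain.compact embedding} with weight $a = |g|$, which is admissible since $g \in L^{q_0}\cap L^\infty$ by $(H_2)$ and hence $|g|\ge 0$ lies in the same spaces; this yields $\int |g|\,|u|^q \le C\|u\|^q$ through Hölder and Sobolev inequalities. Since $Y$ and $Z$ are orthogonal with respect to $\langle\cdot,\cdot\rangle_1$ we have $\|u\|^2 = \|y\|^2 + \|z\|^2$, and because $q/2 < 1$ the elementary inequality $(a+b)^{q/2}\le a^{q/2}+b^{q/2}$ for $a,b\ge 0$ gives $\|u\|^q \le \|y\|^q + \delta^q$.

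At this point the key mechanism appears: since $q < 2$, Young's inequality — equivalently, the fact that $t \mapsto C t^q - \frac{1}{4}t^2$ is bounded above on $[0,\infty)$ — produces a constant $C_1 = C_1(C,q)$ with $C\|y\|^q \le \frac{1}{4}\|y\|^2 + C_1$ for all $y$. Collecting the estimates,
$$\varphi(u) \le \tfrac{1}{2}\delta^2 - \tfrac{1}{2}\|y\|^2 + \tfrac{1}{4}\|y\|^2 + C_1 + C\delta^q \le \tfrac{1}{2}\delta^2 + C_1 + C\delta^q =: C_\delta,$$
where the leftover term $-\frac{1}{4}\|y\|^2$ is nonpositive and therefore dropped, and the resulting bound $C_\delta$ is finite and independent of $\|y\|$. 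I expect the only subtle point to be the recognition that the $\tau$-norm provides no control whatsoever on $\|Pu\|$, so the estimate must be uniform in it; the negative definiteness of the quadratic form on $Y$ combined with the strict inequality $q<2$ is precisely what makes the absorption succeed, while the positivity of $h$ is what allows the $L^p$-term to be thrown away rather than controlled.
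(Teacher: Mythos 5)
Your proposal is correct and follows essentially the same route as the paper's proof: decompose $u=y+z$, use $\|z\|=\|Qu\|\leq\|u\|_{\tau}<\delta$, discard the nonpositive $h$-term, bound the $g$-term by $C(\|y\|^{q}+\|z\|^{q})$ via the weighted embedding, and absorb $C\|y\|^{q}$ into $-\tfrac{1}{2}\|y\|^{2}$ using $q<2$. The only cosmetic difference is that you pass through $\|u\|^{q}\leq\|y\|^{q}+\|z\|^{q}$ and an explicit Young-type absorption, while the paper splits $|u|^{q}\leq 2^{q-1}(|y|^{q}+|z|^{q})$ pointwise and simply notes that $-\tfrac{1}{2}\|y\|^{2}+C\|y\|^{q}$ is bounded above.
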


\begin{proof}
By $u \in H^1(\mathbb{R}^N)=Y\bigoplus Z$ with $Z=Y^{\bot}$ we may set $u=y+z$ for some $y \in Y$ and $z\in Z$, then,
\begin{eqnarray*}
  \varphi(u)&=&\frac{1}{2}(-\| y \|^{2}+\| z \|^{2})
    - \frac{1}{q}\int_{\mathbb{R}^{N}} g(x)|u | ^{q}dx
     -\frac{1}{p}\int_{\mathbb{R}^{N}} h(x)| u | ^{p}dx \\
   &\leq& \frac{1}{2}(-\| y \|^{2}+\| z \|^{2})
    + \frac{1}{q}\int_{\mathbb{R}^{N}} |g(x)||u | ^{q}dx\\
   &\leq& -\frac{1}{2}\| y \|^{2}+C\|y\|^{q}+\frac{1}{2}\| z \|^{2}+C\|z\|^{q}.
\end{eqnarray*}
Since $q<2$, $-\frac{1}{2}\| y \|^{2}+C\|y\|^{q}$ is bounded from above. By (\ref{fountain.tau norm}), we have  $\|z\|\leq\|u\|_{\tau}\leq \delta$, so,
there exists $C_\delta<\infty$  such that
$$ \sup\limits_{\|u\|_{\tau}<\delta }\varphi(u) <C_\delta.$$

\end{proof}

\begin{proof}[Proof of Theorem \ref{fountain.existence result}]
By Theorem \ref{fountain.Fountain Thm} and Lemmas \ref{fountain.PSc}-\ref{fountain.lemma3.4},
in order to prove Theorem \ref{fountain.existence result} we need only to verify the conditions $(A_{1})$, $(A_{2})'$ and $(A_{3})$.

Clearly, $(A_1)$ is true since $\varphi$ maps a bounded set into a bounded set.

Next, we prove $(A_{2})'$  by showing that $a_{k}\rightarrow - \infty $ as $\rho_{k} \rightarrow  \infty $.
Let $u=y+z$ with $y \in Y$ and $z \in Z $. For any $u\in Y_{k}$,
by Lemma \ref{fountain.compact embedding}, we  see that
\begin{eqnarray*}
  \varphi(u)&=&\frac{1}{2}(-\| y \|^{2}+\| z \|^{2})
    - \frac{1}{q}\int_{\mathbb{R}^{N}} g(x)|u | ^{q}dx
     -\frac{1}{p}\int_{\mathbb{R}^{N}} h(x)| u | ^{p}dx \\
   &\leq& \frac{1}{2}(-\| y \|^{2}+\| z \|^{2})
    +\frac{1}{q}| u|^{q}_{L^{q}_{|g(x)|}}
     -\frac{1}{p}  | u|^{p}_{L^{p}_{h(x)}}\\
   &\leq& \frac{1}{2}(-\| y \|^{2}+\| z \|^{2})
   +\frac{2^{q-1}}{q}(\|y\|^{q}_{L^{q}_{|g(x)|}}+\|z\|^{q}_{L^{q}_{|g(x)|}})
    - \frac{1}{p}| u|^{p}_{L^{p}_{h(x)}}\\
    &\leq&  \frac{1}{2}(-\| y \|^{2}+\| z \|^{2}) +C_q (\|y\|^{q}+\|z\|^{q})- \frac{1}{p}| u|^{p}_{L^{p}_{h(x)}}.
\end{eqnarray*}

Since   $H^{1}(\mathbb{R}^{N}) \hookrightarrow L^{p}(h(x), \mathbb{R}^{N})$, we denote by
$E_{k}$ the closure of $Y_{k}$ in $L^{p}(h(x),\mathbb{R}^{N})$, then there exists a continuous projection from $E_{k}$
to $\bigoplus_{j=0}^{k}f_{j}$, thus there exists a constant $C>0$ such that
$$| z|^{p}_{L^{p}_{h(x)}} \leq C| u|^{p}_{L^{p}_{h(x)}},$$
and note that all norms are equivalent in a finite-dimensional vector space, then for any $z \in \bigoplus_{j=0}^{k}f_{j}$, there exists $C>0$ such that
$$\| z \|^{p}\leq C | z|^{p}_{L^{p}_{h(x)}},$$
thus
$$
  \varphi(u) \leq (-\frac{1}{2}\| y \|^{2}+C\|y\|^{q})+(\frac{1}{2}\| z \|^{2}
    +C\|z\|^{q}-C\| z \|^{p}).
$$
So,
$$a_{k}:=\sup\limits_{u\in Y_{k},\|u\|= \rho_{k}}\varphi(u)\rightarrow -\infty \quad \text{as} \quad  \rho_{k}\rightarrow \infty.$$

Finally, for any $u\in Z_{k}$ with $\|u\|=r_k$, let $u=y+z$ with $y \in Y$ and $z \in Z $, then $y=0$, $z=u$. Furthermore,
\begin{eqnarray*}
  \varphi(u) &=& \frac{1}{2}\| u \|^{2}- \frac{1}{q}\int_{\mathbb{R}^{N}} g(x)|u | ^{q}dx
     -\frac{1}{p}\int_{\mathbb{R}^{N}} h(x)| u | ^{p}dx \\
   &\geq& \frac{1}{2}\| u \|^{2}- \frac{1}{q}\int_{\mathbb{R}^{N}} |g(x)||u | ^{q}dx
     -\frac{1}{p}\int_{\mathbb{R}^{N}} h(x)| u | ^{p}dx \\
   &\geq& (\frac{1}{4}\|u\|^{2}-C\|u\|^{q})+(\frac{1}{4}\|u\|^{2}-\frac{1}{p}\beta_{k}^{p}\|u\|^{p}).
\end{eqnarray*}
Choose $r_{k}=(\frac{p}{4})^{\frac{1}{p-2}}\frac{1}{\beta_{k}^{\frac{p}{p-2}}}$, we have
$$ \varphi(u) \geq \frac{1}{4}\|u\|^{2}-C\|u\|^{q}=\frac{1}{4}|r_k|^{2}-C|r_k|^{q}. $$
Since we have $r_k \overset{k}{\rightarrow} +\infty$ by
$$\beta_{k} \rightarrow 0    \quad \text{as} \quad k\rightarrow\infty,$$
then,
$$b_{k}:=\inf\limits_{u\in Z_{k},\|u\|= r_{k}}\varphi(u) \rightarrow +\infty   \quad \text{as} \quad k \rightarrow \infty.$$
Thus   $(A_{3})$ is also proved.

{\bf Acknowledgements:} This work was supported by the NSFC under grants 11471331 and 11501555.

\end{proof}

\end{document}